\newtheorem{definition}{Definition}[section]
\newtheorem{theorem}{Theorem}[section]
\newtheorem{lemma}{Lemma}[section]
\newtheorem{cor}{Corollary}[section]
\newtheorem{fact}{Fact}[section]
\newtheorem{remark}{Remark}[section]
\def\bbp{\mathbb{P}}
\def\bbr{\mathbb{R}}
\def\bbk{\mathbb{K}}
\def\bbl{\mathbb{L}}
\def\ca{\mathscr{A}}
\def\cb{\mathscr{B}}
\def\cf{\mathscr{F}}
\def\ci{\mathscr{I}}
\def\cj{\mathscr{J}}
\def\cp{\mathscr{P}}
\def\cx{\mathscr{X}}
\def\Borel{\mbox{\rm Borel}}
\def\Open{\mbox{\rm Open}}
\def\<{{\langle}}
\def\>{\rangle}
\def\lbv{{[\!|}}
\def\rbv{{|\!]}}
\def\force{{\;\Vdash\;}}
\def\iff{\longleftrightarrow}
\title{Generalized Luzin sets}
\author{Robert Ra{\l}owski and Szymon \.Zeberski}
\address{ Institute of 
         Mathematics and Computer Science, 
         Wroc{\l}aw University of Technology, Wybrze\.ze Wyspia\'n\-skie\-go 27, 
         50-370 Wroc{\l}aw, Poland.}
\email[Robert Ra{\l}owski]{robert.ralowski@pwr.wroc.pl}
\email[Szymon \.Zeberski]{szymon.zeberski@pwr.wroc.pl}
\subjclass{Primary 03E20, 03E35; Secondary 03E17, 03E15, 03E55}
\keywords{Luzin set, Sierpi{\'n}ski set, definable forcing, measurable function, meager set, null set.}
\date{}
\begin{document}
\maketitle
\begin{abstract}
 In this paper we invastigate the notion of generalized $(\ci,\cj)$ - Luzin set. 
 This notion generalize the standard notion of Luzin set and Sierpi{\'n}ski set. 
 We find set theoretical conditions which imply the existence of generalized $(\ci, \cj)$ - Luzin set. 
 We show how to construct large family of pairwise non-equivalent $(\ci,\cj)$ - Luzin sets. 
 We find a class of forcings which preserves the property of being $(\ci,\cj)$ - Luzin set. 
\end{abstract}

\section{Notation and Terminology}

We will use standard set-theoretic notation following \cite{jech}. In particular for any set $X$ and any cardinal $\kappa$,
  $[X]^{<\kappa}$ denotes the set of all subsets of $X$ with size less than $\kappa.$ Similarly, $[X]^\kappa$ denotes the family of subsets of $X$ of size $\kappa.$ By $\cp(X)$ we denote the  power set of $X$.

If $A\subseteq X\times Y$ then for $x\in X$ and $y\in Y$ we put
$$
A_x=\{y\in Y:\; (x,y)\in A\},
$$
$$
A^y=\{x\in X:\; (x,y)\in A\}.
$$
By $A\vartriangle B$ we denote the symmetric difference of sets $A$ and $B,$ i.e.
$$
A\vartriangle B= (A\setminus B)\cup (B\setminus A).
$$ 
In this paper $\cx$ denotes uncountable Polish space. 
By $\Open(\cx)$ we denote the topology of $\cx.$
By $\Borel(\cx)$ we denote the $\sigma$-field of all Borel sets. Let us recall that each Borel set can be coded by a function from $\omega^\omega.$ Precise definition of such coding can be found in \cite{kechris}. If $x\in\omega^\omega$ is a Borel code then by $\#x$ we denote the Borel set coded by $x.$ 

$\ci$, $\cj$ are $\sigma $-ideals on $\cx,$ i.e. $\ci, \cj\subseteq\cp(\cx)$ are closed under countable unions and subsets. Additionally we assume that $[\cx]^\omega\subseteq\ci, \cj.$ Moreover $\ci, \cj$ have Borel base i.e each set from the ideal can be covered by a Borel set from the ideal. Standard examples of such ideals are the ideal $\bbl$ of Lebesgue measure zero sets and the ideal $\bbk$ of meager sets of Polish space.

\begin{definition} Let $M\subseteq N$ be standard transitive models of ZF. \\
Coding Borel sets from the ideal $I$ is absolute iff
$$
(\forall x\in M\cap \omega^\omega)( M\models \#x\in I\iff N\models \#x\in I).
$$
\end{definition}

We say that $\ci$ satisfies $\kappa$ chain condition ($\kappa$-c.c.) if every family $\ca$ of Borel subsets of $\cx$ satisfying the following conditions:
\begin{enumerate}
 \item $(\forall A\in\ca)(A\notin\ci)$
 \item $(\forall A,B\in\ca)(A\neq B\rightarrow A\cap B\in\ci)$
\end{enumerate}
 has size smaller than $\kappa.$
If $\ci$ is $\omega_1$-c.c. then we say that $\ci$ is c.c.c.

Let us recall that a function $f:\cx\rightarrow\cx $ is $\ci$-measurable if the preimage of 
every open subset of $\cx$ is $\ci$-measurable i.e belongs to the $\sigma $-field generated by 
Borel sets and the ideal $\ci.$ In other words $f$ is $\ci$-measurable iff
$$
(\forall U\in \Open(\cx))(\exists B\in \Borel(\cx))(\exists I\in\ci)(f^{-1}[U]=B\vartriangle I).
$$ 

Let us recall the following cardinal coefficients:
\begin{definition}[Cardinal coefficients] 
$$\begin{array}{l@{\ =\,{} }l}
non(\ci) &  \min\{ |A|:\; A\subseteq \cx\land A\notin \ci\}\\
add(\ci) &  \min\{ |\ca|:\;\ca\subseteq \ci\land \bigcup \ca\notin \ci \}\\
cov(\ci) &  \min\{ |\ca|:\;\ca\subseteq \ci\land \bigcup \ca=\cx \}\\
cov_h(\ci) &  \min\{ |\ca|:\;\ca\subseteq \ci\land (\exists B\in \Borel(\cx)\setminus\ci)( B\subseteq  
       \bigcup \ca) \}\\
cof(\ci) & \min\{ |\ca|:\; \ca\subseteq \ci\land \ca\text{ is a base of }\ci\}
\end{array}$$
where $\ca\text{ is a base of }\ci$ iff $\ca\subseteq\ci\land (\forall I\in\ci)(\exists A\in\ca)( I\subseteq A).$
\end{definition}
Let us remark that above coefficients can be defined for larger class of families (not only ideals).

\begin{definition}
We say that $L\subseteq \cx$ is a $(\ci,\cj)$ - Luzin set if
\begin{itemize}
 \item $L\notin \ci,$ 
 \item $(\forall B\in \ci)( B\cap L\in \cj).$
\end{itemize}
Assume that $\kappa $ is a cardinal number. We say that $L\subseteq \cx$ is a $(\kappa,\ci,\cj)$ -  Luzin set iff $L$ is a $(\ci,\cj)$ - Luzin set and $|L|=\kappa$.
\end{definition}

The above definition generalizes the standard notion of Luzin and Sierpi{\'n}ski sets. Namely, $L$ is Luzin set iff $L$ is generalized $(\bbl,[\bbr]^{\le\omega})$ - Luzin set and $S$ is Sierpi{\'n}ski set iff $S$ is generalized $(\bbk,[\bbr]^{\le\omega})$ - Luzin set.
The above notion generalizes also notions from \cite{C}.

\begin{definition}
We say that ideals $\ci$ and $\cj$ are orthogonal if
$$
\exists A\in\cp(\cx)\; A\in I\land A^c\in \cj.
$$
In such case we write $\ci\perp \cj$.
\end{definition}

\begin{definition} Let $\cf\subseteq \cx^\cx$ be a family of functions. We say that $A,B\subseteq \cx$ are equivalent with respect to $\cf$ if
$$
(\exists f\in \cf)\; (B=f[A]\vee A=f[B])
$$
\end{definition}

\begin{definition} We say that $A,B\subseteq \cx$ are Borel equivalent if
$A, B$ are equivalent with respect to the family of all Borel functions.
\end{definition}

\begin{definition} We say that $\ci$ has Fubini property iff for every Borel set $A\subseteq\cx\times\cx$
$$\{x\in\cx : A_x\notin\ci\}\in\ci\Longrightarrow\{y\in\cx : A^y\notin\ci\}\in\ci$$
\end{definition}
Natural examples of ideals fulfilling Fubini property are the ideal of null sets $\bbl$ (by Fubini theorem) and the ideal of meager sets $\bbk$ (by Kuratowski-Ulam theorem). 

By definition we can obtain the following properties:
\begin{fact} Assume that $\ci\perp \cj.$  
\begin{enumerate}
 \item There exist a $(\ci,\cj)$ - Luzin set. 
 \item If $L$ is a $(\ci,\cj)$ - Luzin set then $L$ is not $(\cj,\ci)$ - Luzin set.
\end{enumerate}
\end{fact}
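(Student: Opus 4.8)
The plan is to obtain both statements by directly unwinding the definition of orthogonality together with the closure of $\ci$ and $\cj$ under subsets and finite unions; no cardinal arithmetic, forcing, or the Borel-base hypothesis is needed. Throughout I fix a witness for $\ci\perp\cj$: a set $A\subseteq\cx$ with $A\in\ci$ and $A^c\in\cj$.

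For part (1) I would take $L:=A^c$ and verify the two clauses defining a $(\ci,\cj)$-Luzin set. The second clause is immediate: for every $B\in\ci$ we have $B\cap L\subseteq L=A^c\in\cj$, so $B\cap L\in\cj$ since $\cj$ is closed under subsets. For the first clause, $L\notin\ci$, I argue by contradiction: if $L\in\ci$ then $\cx=A\cup L$ would be a finite union of members of $\ci$ and hence itself in $\ci$, contradicting that $\ci$ is a proper $\sigma$-ideal on $\cx$. Thus $L=A^c$ is the desired $(\ci,\cj)$-Luzin set.

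For part (2) I would argue by contradiction: suppose some $L\subseteq\cx$ were at once a $(\ci,\cj)$-Luzin set and a $(\cj,\ci)$-Luzin set. Being $(\ci,\cj)$-Luzin gives $L\notin\ci$. Now split $L=(A\cap L)\cup(A^c\cap L)$. The first piece satisfies $A\cap L\subseteq A\in\ci$, hence $A\cap L\in\ci$. For the second piece, $A^c\in\cj$, so the defining property of a $(\cj,\ci)$-Luzin set applied to the $\cj$-set $A^c$ gives $A^c\cap L\in\ci$. Hence $L$ is a finite union of members of $\ci$, so $L\in\ci$, contradicting $L\notin\ci$. This also makes clear that orthogonality is used essentially: when, say, $\ci=\cj$, every set outside $\ci$ is trivially both an $(\ci,\cj)$- and a $(\cj,\ci)$-Luzin set.

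I do not anticipate a genuine obstacle here; the Fact is a bookkeeping consequence of the definitions. The one point worth flagging is the implicit use of $\cx\notin\ci$ (properness of $\ci$) in part (1); part (2) needs no such assumption beyond $\ci\perp\cj$.
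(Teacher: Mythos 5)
Your proof is correct and follows essentially the same route as the paper: take the complement $A^c$ of the orthogonality witness as the Luzin set in part (1), and in part (2) decompose $L$ along $A$ and $A^c$ to land $L$ in $\ci$. The only difference is that you make explicit the properness of $\ci$ (i.e.\ $\cx\notin\ci$) that the paper uses silently, which is a reasonable clarification rather than a divergence.
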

\begin{proof}
 (Part 1) By the definition of $\ci\perp\cj$ we can find two sets $I\in\ci$ and $J\in\cj$ such that $I\cup J=\cx.$ We will show that $J$ is $(\ci,\cj)$ - Luzin set. 
 $J$ is not in $\ci.$ Let us fix any set $A\in\ci.$ We have that $A\cap J\subseteq J\in\cj.$

 (Part 2) By the definition of $\ci\perp\cj$ we can find two sets $I\in\ci$ and $J\in\cj$ such that $I\cup J=\cx.$ Assume that $L$ is $(\ci,\cj)$ - Luzin set and $(\cj,\ci)$ - Luzin set. We have that
$$
L\cap J\subseteq J\in \cj \text{ and } L\cap I\subset I\in \ci
$$
By the property of being $(\cj,\ci)$ - Luzin set
$$
L\cap J\in\ci.
$$
So
$L= (L\cap J)\cup (L\cap I)\in \ci.$ what is a contradiction with being $(\ci, \cj)$ - Luzin set.
\end{proof}

We will try to find a wide class of forcings which preserves the property of being $(\ci, \cj)$ - Luzin set. We are mainly interested in so called definable forcings (see \cite{zapletal}). Let us recall that $\bbp$ is definable forcing if $\bbp$ is of the form $\Borel(\cx)\setminus\ci,$ where $\cx$ and $\ci$ have absolute definition for standard transitive models of ZF of the same hight. 

\section{Existence of Luzin sets }

Let us start with a theorem which under suitable assumptions guarantees 
existence of uncountably many pairwise different $(\ci,\cj)$ - Luzin sets. 

\begin{theorem}
 Assume that $\kappa=cov(\ci)=cof(\ci)\le non(\cj).$
 Let $\cf $ be a family of functions from $\cx$ to $\cx.$ 
 Assume that $|\cf|\le\kappa.$
 Then we can find a sequence $(L_\alpha)_{\alpha<\kappa}$ such that
 \begin{enumerate}
  \item $L_\alpha$ is $(\kappa,\ci,\cj)$ - Luzin set,
  \item for $\alpha\neq\beta,\ L_\alpha$ is not equivalent to $L_\beta$ 
        with respect to the family $\cf.$   
 \end{enumerate}
\end{theorem}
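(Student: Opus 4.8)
The plan is to build the whole sequence $(L_\alpha)_{\alpha<\kappa}$ simultaneously, by a transfinite recursion of length $\kappa$: at every stage one new point is added to each $L_\alpha$ (this takes care of $|L_\alpha|=\kappa$ and of the Luzin property), and in addition stage $\xi$ is devoted to one ``separation requirement'' from a fixed list. Fix a base $\{B_\xi:\xi<\kappa\}$ of $\ci$ consisting of Borel sets (possible since $cof(\ci)=\kappa$ and $\ci$ has a Borel base), and an enumeration $\{(\gamma_\xi,\delta_\xi,g_\xi):\xi<\kappa\}$, with repetitions if necessary, of all triples $(\gamma,\delta,g)$ with $\gamma,\delta<\kappa$, $\gamma\ne\delta$, $g\in\cf$ -- there are at most $\kappa\cdot\kappa\cdot\kappa=\kappa$ of them because $|\cf|\le\kappa$. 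Along the recursion I also keep, for each $\alpha<\kappa$, a family $R_\alpha$ of ``forbidden'' sets, initially empty, each member a member of $\ci$ or a singleton, and with $|R_\alpha|<\kappa$ at every stage; every point put into $L_\alpha$ will be required to lie outside $\bigcup R_\alpha$.

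At stage $\xi$ two things happen. \emph{First}, for every $\alpha<\kappa$ I choose a point $x_{\alpha,\xi}$ outside $\bigcup_{\eta<\xi}B_\eta\cup\bigcup R_\alpha$ and different from all points previously put into $L_\alpha$, and add it to $L_\alpha$. This is possible: using $[\cx]^\omega\subseteq\ci$ to view the fewer-than-$\kappa$ excluded points as members of $\ci$, the set to be avoided is a union of fewer than $\kappa$ members of $\ci$, so by $cov(\ci)=\kappa$ its complement is nonempty; moreover, were the complement of size $<\kappa$ we could split it into singletons and cover $\cx$ by $<\kappa$ members of $\ci$, a contradiction, so the complement always has size $\ge\kappa$. \emph{Second}, I take $(\gamma,\delta,g)=(\gamma_\xi,\delta_\xi,g_\xi)$ and work to guarantee $L_\delta\ne g[L_\gamma]$. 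If $|g[\cx]|<\kappa$ there is nothing to do, since then $|g[L_\gamma]|<\kappa=|L_\delta|$. Otherwise I try, in this order:
\begin{itemize}
\item[(I)] pick $z$ admissible for $L_\delta$ with $g^{-1}(z)$ countable and $z\notin g[L_\gamma]$ (the part built so far), add $z$ to $L_\delta$ and add the countable set $g^{-1}(z)$ to $R_\gamma$; then $z$ stays in $L_\delta\setminus g[L_\gamma]$ forever, so $L_\delta\not\subseteq g[L_\gamma]$;
\item[(II)] if (I) is impossible, pick $y$ admissible for $L_\gamma$ with $g(y)\notin L_\delta$ (the part built so far), add $y$ to $L_\gamma$ and add $\{g(y)\}$ to $R_\delta$; then $g(y)$ stays in $g[L_\gamma]\setminus L_\delta$ forever, so $g[L_\gamma]\not\subseteq L_\delta$.
\end{itemize}
Either way $L_\delta\ne g[L_\gamma]$ is secured once and for all, and at most one set -- countable, resp.\ a singleton -- is appended to one $R_\alpha$, so every $R_\alpha$ stays of size $<\kappa$.

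The heart of the matter is that when $|g[\cx]|\ge\kappa$, at least one of (I), (II) is available. Suppose (I) fails. Since the admissible set for $L_\delta$ has size $\ge\kappa$ while $g[L_\gamma]$ (built so far) has size $<\kappa$, failure of (I) forces $\{z:|g^{-1}(z)|\le\omega\}$ to meet the admissible set in $<\kappa$ points; as the admissible set itself has size $\ge\kappa$, this means the set of $z$ with $g^{-1}(z)$ uncountable meets it in $\ge\kappa$ points, so $g$ has $\ge\kappa$ many pairwise disjoint uncountable (in particular nonempty) fibres. Deleting the $<\kappa$ of them indexed by the current $L_\delta$ leaves $\ge\kappa$ many, whose union has size $\ge\kappa$ and is disjoint from $g^{-1}[L_\delta]$; any point of that union admissible for $L_\gamma$ then witnesses (II), and one argues such a point exists. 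The genuinely delicate part of the proof is carrying this dichotomy out in full -- in particular arranging the bookkeeping so that the ``separation'' points inserted for (I), (II) simultaneously dodge the base $\{B_\eta\}$ of $\ci$ needed for the Luzin property and land in the prescribed fibres -- and this is exactly where the three hypotheses $cov(\ci)=cof(\ci)\le non(\cj)$ are combined.

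Finally one verifies the two conclusions. Each $L_\alpha$ receives $\kappa$ distinct points, and for each $\eta$ only boundedly (fewer than $\kappa$) many of them lie in $B_\eta$, so $B_\eta\cap L_\alpha$ has size $<\kappa\le non(\cj)$ and hence lies in $\cj$; since $\{B_\eta\}$ is a base of $\ci$ and $\cj$ is an ideal, $B\cap L_\alpha\in\cj$ for every $B\in\ci$, and $L_\alpha\notin\ci$ because otherwise $L_\alpha\subseteq B_\eta$ for some $\eta$, forcing $|L_\alpha|<\kappa$. Thus every $L_\alpha$ is a $(\kappa,\ci,\cj)$-Luzin set. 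For $\alpha\ne\beta$ and $g\in\cf$ the triples $(\alpha,\beta,g)$ and $(\beta,\alpha,g)$ were both treated at some stage, so $L_\beta\ne g[L_\alpha]$ and $L_\alpha\ne g[L_\beta]$; hence $L_\alpha$ and $L_\beta$ are not equivalent with respect to $\cf$. Everything outside the dichotomy of the previous paragraph reduces to the repeatedly used fact that, under $\kappa=cov(\ci)$ and $[\cx]^\omega\subseteq\ci$, the complement of a union of fewer than $\kappa$ members of $\ci$ always has size $\ge\kappa$.
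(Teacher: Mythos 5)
There is a genuine gap at exactly the step you yourself flag as ``the genuinely delicate part'': the dichotomy ``if $|g[\cx]|\ge\kappa$ then (I) or (II) is available'' is nowhere proved, and as stated it is false. Take $\cx=\bbr$, $\ci=\bbl$, let $C$ be the Cantor set, partition $C$ into continuum many pairwise disjoint uncountable pieces $\{C_z:z\in\bbr\}$, and let $g(x)=z$ for $x\in C_z$ and $g(x)=x_0$ for $x\notin C$, with $x_0$ a fixed point. Then $|g[\cx]|=2^\omega\ge\kappa$, yet every fibre of $g$ is uncountable, so (I) is never available; and at any stage $\xi$ late enough that $C\subseteq\bigcup_{\eta<\xi}B_\eta$, if $x_0$ already lies in $L_\delta$ then every $y$ admissible for $L_\gamma$ lies outside $C$ and hence satisfies $g(y)=x_0\in L_\delta$, so (II) fails as well. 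Your sketch of the dichotomy breaks precisely where you write ``one argues such a point exists'': the union of the $\ge\kappa$ uncountable fibres can sit entirely inside a single member of $\ci$ (here the Cantor set), hence inside the forbidden set for $L_\gamma$, and none of the hypotheses $cov(\ci)=cof(\ci)\le non(\cj)$ rules this out.

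The requirement $L_\delta\ne g[L_\gamma]$ is nevertheless met in this example --- but only because $|g[L_\gamma]|<\kappa$ holds automatically: all but fewer than $\kappa$ points of $L_\gamma$ avoid $C$ and are therefore sent to $x_0$. This is exactly the reduction the paper makes and you do not: one assumes without loss of generality that every $g\in\cf$ satisfies $\kappa\le|g[(\bigcup_{\xi<\lambda}B_\xi)^c]|$ for all $\lambda<\kappa$, since a function failing this maps any set built to avoid tails of the base onto a set of size $<\kappa$, which cannot equal a Luzin set of size $\kappa$. Your weaker reduction ``$|g[\cx]|<\kappa$ implies nothing to do'' does not exclude the example above. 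Once the stronger reduction is in place, your option (II) alone always succeeds: for any forbidden set $S$ consisting of $\bigcup_{\eta<\xi}B_\eta$ together with fewer than $\kappa$ additional points, $g[\cx\setminus S]$ still has size $\ge\kappa$, so some admissible $y$ has $g(y)\notin L_\delta$. Option (I) and the whole fibre analysis then become unnecessary --- this is essentially the paper's construction, which chooses the image point first, outside everything built so far, and then takes a preimage of it.
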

\begin{proof} 
Let us enumerate the family $\cf$:
$$
\cf=\{ f_\alpha:\alpha<\kappa\}.
$$
Now, let us enumerate Borel base of ideal $\ci$:
$$
\cb_\ci=\{ B_\alpha:\;\alpha<\kappa\}.
$$

Now without loss of generality we can assume that 
$$
(\forall f\in\cf)(\forall \lambda<\kappa) ( \kappa\le |f[(\bigcup\nolimits_{\xi<\lambda}B_\xi)^c]| )
$$
Indeed, since $cov(\ci)=\kappa$ a set $(\bigcup\nolimits_{\xi<\lambda}B_\xi)^c$ is not in the ideal $\ci.$
If the function $f$ does not have the above property and $L$ is a $(\ci,\cj)$-Luzin set then 
$$
f[L]=f[L\cap \bigcup\nolimits_{\xi<\lambda}B_\xi]\cup f[L\cap (\bigcup\nolimits_{\xi<\lambda}B_\xi)^c]
$$
and both sets has cardinality less than $\kappa.$ So $f[L]$ is not $(\ci,\cj)$-Luzin set.

By induction we will construct the family $\{x^\eta_{\alpha,\zeta}: \eta,\zeta,\alpha<\kappa\}$ and  $\{d^\eta_{\alpha,\zeta}: \eta,\zeta,\alpha<\kappa\}$
such that 
$$
d^\eta_{\alpha,\zeta}=f_\zeta(x^\eta_{\alpha,\zeta})
$$
and for any different $\eta,\eta'<\kappa$
$$
\{x^\eta_{\alpha,\zeta}: \zeta,\alpha<\kappa\} \cap \{d^{\eta'}_{\alpha,\zeta}: \zeta,\alpha<\kappa\}=\emptyset
$$
and 
$$
x_{\alpha,\zeta}^\eta\in\cx\setminus \left(\{d^\eta_{\xi,\zeta}:\;\eta,\xi,\zeta<\alpha\}\cup \{x^\eta_{\xi,\zeta}:\;\eta,\xi,\zeta<\alpha\}\cup\bigcup_{\xi<\alpha }B_\xi\right)
$$
for every $\eta,\zeta<\alpha.$

Assume that we are in $\alpha$-th step of construction. Fix $\eta,\zeta<\alpha$. It means that we have constructed the following set
$$
Old=\{ x^\lambda_{\beta,\xi},d^\lambda_{\beta,\xi}:\; \beta,\xi,\lambda<\alpha\}\cup \{ x^\lambda_{\alpha,\xi},d^\lambda_{\alpha,\xi}:\; \lambda<\eta\lor (\lambda=\eta\land \xi<\zeta)\}.
$$
Since $|f_\zeta[(\bigcup_{\xi<\alpha} B_\xi)^c]|\ge\kappa$ and $|Old|<\kappa$ we get that 
$$
|f_\zeta[(\bigcup_{\xi<\alpha} B_\xi\cup Old)^c]|\ge\kappa.
$$
That's why we can find 
$$
d^\eta_{\alpha,\zeta}\in f_\zeta[(\bigcup_{\xi<\alpha} B_\xi\cup Old)^c]\setminus Old.
$$
Let $x^\eta_{\alpha,\zeta}$ be such that $d^\eta_{\alpha,\zeta}=f_\zeta(x^\eta_{\alpha,\zeta})$. In this way we can finish the $\alpha$-th step of construction.


Now, let us define $L_\alpha=\{x^\alpha_{\xi,\zeta} : \xi,\zeta<\kappa\}.$ 

Let us check that $L_\alpha$ is $(\ci,\cj)$ - Luzin set. Indeed,
if $A\in \ci$ then there exists $\beta<\kappa$ s.t. $A\subset B_\beta$.
Then we have
$$
A\cap L_\alpha\subset B_\beta\cap L_\alpha=
B_\beta\cap \{x^\alpha_{\xi,\zeta}:\;\xi,\zeta<\beta\}
\subseteq \{x^\alpha_{\xi,\zeta}:\;\xi,\zeta<\beta\}\in \cj
$$
because $|\{x^\alpha_{\xi,\zeta}:\;\xi,\zeta<\beta\}|\le|\beta|<\kappa\le non(\cj).$ 

What is more, for every function $f=f_\alpha\in\cf$ and every $\beta\neq\gamma$ 
we have that 
$$
\kappa\le |f[L_\gamma]\setminus L_\beta|
$$
because $\{ d^{\gamma}_{\xi,\alpha}:\alpha<\xi<\kappa\}\subseteq f[L_\gamma]\setminus L_\beta$.
So $L_\beta\neq f[L_\gamma].$
\end{proof}

In fact we haved proved a little stronger result.
\begin{remark}\label{stronger}
Assume that $\kappa=cov(\ci)=cof(\ci)\le non(\cj).$
 Let $\cf $ be a family of functions from $\cx$ to $\cx.$ 
 Assume that $|\cf|\le\kappa.$
 Then we can find a sequence $(L_\alpha)_{\alpha<\kappa}$ such that
 \begin{enumerate}
  \item $L_\alpha$ is $(\kappa,\ci,\cj)$ - Luzin set,
  \item for $\alpha\neq\beta$ and $f\in\cf$ we have that $\kappa\le |f[L_\alpha]\vartriangle L_\beta|.$
 \end{enumerate} 
\end{remark}

Let us notice that for every ideal $\ci$ we have the inequality 
$cov(\ci)\le cof(\ci).$ This gives
 the following corollary.
\begin{cor}\label{conti} 
If $2^\omega=cov(\ci)=non(\cj)$ 
then there exists continuum many different $(\ci,\cj)$ - Luzin sets 
which aren't Borel equivalent.

In particular, if CH holds then there exists continuum many different 
$(\omega_1,\ci,\cj)$ - Luzin sets which aren't Borel equivalent.
\end{cor}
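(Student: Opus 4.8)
The plan is to obtain Corollary~\ref{conti} as a direct instance of the previous theorem, taking $\kappa=2^\omega$ and letting $\cf$ be the family of \emph{all} Borel functions from $\cx$ to $\cx$. First I would verify the cardinal hypotheses. Since $\ci$ has a Borel base, the Borel members of $\ci$ already form a base of $\ci$, and as there are at most $2^\omega$ Borel subsets of the Polish space $\cx$ (each coded by an element of $\omega^\omega$), we get $cof(\ci)\le 2^\omega$. On the other hand the inequality $cov(\ci)\le cof(\ci)$ holds for every proper ideal, and under our assumption $\ci$ is proper because $cov(\ci)=2^\omega>1$. Combining, $2^\omega=cov(\ci)\le cof(\ci)\le 2^\omega$, hence $cov(\ci)=cof(\ci)=2^\omega$, and together with the assumption $non(\cj)=2^\omega$ this is exactly the chain $\kappa=cov(\ci)=cof(\ci)\le non(\cj)$ needed in the theorem.

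Next I would bound $|\cf|$. Fixing a countable base $(U_n)_{n\in\omega}$ of $\cx$, a Borel function $f\colon\cx\to\cx$ is determined by the sequence $(f^{-1}[U_n])_{n\in\omega}$ of Borel sets, each of which has a Borel code in $\omega^\omega$; thus each Borel function is coded by an element of $(\omega^\omega)^\omega$, so there are at most $2^\omega$ of them and $|\cf|\le 2^\omega=\kappa$. Now the theorem applies and yields a sequence $(L_\alpha)_{\alpha<2^\omega}$ of $(2^\omega,\ci,\cj)$ - Luzin sets such that for $\alpha\neq\beta$ the set $L_\alpha$ is not equivalent to $L_\beta$ with respect to $\cf$; since $\cf$ is the family of all Borel functions, this means precisely that $L_\alpha$ and $L_\beta$ are not Borel equivalent. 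This is the required continuum-sized family of pairwise non-Borel-equivalent $(\ci,\cj)$ - Luzin sets (and invoking Remark~\ref{stronger} instead of the theorem would upgrade this to $2^\omega\le|f[L_\alpha]\vartriangle L_\beta|$ for all Borel $f$).

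For the ``in particular'' clause I would check that CH forces the hypothesis with $\kappa=\omega_1$: since $\ci$ is a $\sigma$-ideal, $\omega_1\le add(\ci)\le cov(\ci)\le cof(\ci)\le 2^\omega=\omega_1$, so $cov(\ci)=\omega_1$; and since $[\cx]^\omega\subseteq\cj$ (and $\cj$ is closed under subsets) every countable set lies in $\cj$, giving $\omega_1\le non(\cj)\le|\cx|=2^\omega=\omega_1$, so $non(\cj)=\omega_1=2^\omega$. Then the first part produces continuum many $(\omega_1,\ci,\cj)$ - Luzin sets, pairwise not Borel equivalent. The only genuinely non-automatic points are the two counting estimates $cof(\ci)\le 2^\omega$ and $|\cf|\le 2^\omega$; both are routine consequences of the fact that Borel sets and Borel functions on a Polish space are coded by reals, so I do not expect a real obstacle here — the corollary is essentially a bookkeeping specialization of the theorem.
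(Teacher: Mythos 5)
Your proposal is correct and follows exactly the route the paper intends: the paper proves the corollary simply by noting $cov(\ci)\le cof(\ci)$ (hence, with $cof(\ci)\le 2^\omega$ from the Borel base, all three coefficients equal $2^\omega$) and then applying the preceding theorem to the continuum-sized family of all Borel functions. Your write-up just makes explicit the bookkeeping (counting Borel sets and Borel functions, and verifying the hypotheses under CH) that the paper leaves to the reader.
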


We can extend above corollary to a wilder class of functions 
- namely, $\ci$-measurable functions. 

\begin{cor}\label{conti2}
 If $2^\omega=cov(\ci)=non(\cj)$ 
then there exists continuum many different $(\ci,\cj)$ - Luzin sets 
which aren't equivalent with respect to all $\ci$-measurable functions.

In particular, if CH holds then there exists continuum many different 
$(\omega_1,\ci,\cj)$ - Luzin sets which aren't  equivalent with 
respect to all $\ci$-measurable functions.
\end{cor}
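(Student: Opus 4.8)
The plan is to reduce the statement to Corollary~\ref{conti} (more precisely, to Remark~\ref{stronger}) by observing that, modulo the ideal $\ci$, every $\ci$-measurable function is a Borel function. The precise form of this I would isolate as a lemma: \emph{if $f\colon\cx\to\cx$ is $\ci$-measurable, then there are a Borel function $g\colon\cx\to\cx$ and a set $D\in\ci$ with $f(x)=g(x)$ for all $x\in\cx\setminus D$}. For the proof, fix a countable base $\{U_n:n<\omega\}$ of $\cx$, write $f^{-1}[U_n]=C_n\vartriangle I_n$ with $C_n\in\Borel(\cx)$ and $I_n\in\ci$, and, using that $\ci$ has a Borel base, fix Borel sets $J_n\in\ci$ with $I_n\subseteq J_n$; then $D=\bigcup_{n<\omega}J_n$ is a Borel set in $\ci$. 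Since $f^{-1}[U_n]\vartriangle C_n\subseteq D$, we get $f^{-1}[U_n]\cap(\cx\setminus D)=C_n\cap(\cx\setminus D)$ for every $n$, so $f\restrict(\cx\setminus D)$ is Borel measurable; extending it by a fixed constant on the Borel set $D$ yields a Borel function $g$ on $\cx$ with $\{x:f(x)\neq g(x)\}\subseteq D\in\ci$.

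Next I would put $\kappa=2^\omega$. Since $cov(\ci)\le cof(\ci)\le 2^\omega$ and, by hypothesis, $cov(\ci)=non(\cj)=2^\omega$, we have $\kappa=cov(\ci)=cof(\ci)\le non(\cj)$, so Remark~\ref{stronger} applies. Let $\cf$ be the family of all Borel functions from $\cx$ to $\cx$; as each such function is coded by a real, $|\cf|\le 2^\omega=\kappa$. Applying Remark~\ref{stronger} to this $\cf$ --- this is exactly the sequence already used to prove Corollary~\ref{conti} --- we obtain a sequence $(L_\alpha)_{\alpha<\kappa}$ of $(\kappa,\ci,\cj)$-Luzin sets with $\kappa\le|g[L_\alpha]\vartriangle L_\beta|$ whenever $\alpha\neq\beta$ and $g\in\cf$. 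I would also record one extra feature of that construction, already verified when the Luzin property of $L_\alpha$ is checked in the proof of the theorem above: for every $\beta<\kappa$ the set $B_\beta\cap L_\alpha$ has cardinality smaller than $\kappa$. Consequently $|D\cap L_\alpha|<\kappa$ for every $D\in\ci$, since such a $D$ is covered by some $B_\beta$ from the Borel base $\cb_\ci$.

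I would then assume, towards a contradiction, that some $\ci$-measurable $f$ makes $L_\alpha$ and $L_\beta$ equivalent for some $\alpha\neq\beta$; say $f[L_\alpha]=L_\beta$, the case $L_\alpha=f[L_\beta]$ being symmetric because Remark~\ref{stronger} holds for every ordered pair. Pick $g\in\cf$ and $D\in\ci$ as in the lemma. Since $f$ and $g$ agree off $D$, the sets $f[L_\alpha]$ and $g[L_\alpha]$ can differ only inside $f[L_\alpha\cap D]\cup g[L_\alpha\cap D]$, so $|f[L_\alpha]\vartriangle g[L_\alpha]|\le 2\,|D\cap L_\alpha|<\kappa$. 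Hence $|g[L_\alpha]\vartriangle L_\beta|=|g[L_\alpha]\vartriangle f[L_\alpha]|<\kappa$, contradicting $\kappa\le|g[L_\alpha]\vartriangle L_\beta|$. This shows that the $L_\alpha$ --- of which there are $2^\omega$, pairwise distinct since the identity map is Borel --- are pairwise non-equivalent with respect to all $\ci$-measurable functions; the ``in particular'' clause is the case $\kappa=\omega_1$ under CH, just as in Corollary~\ref{conti}.

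I expect the only genuine obstacle to be the lemma, specifically the insistence that $g$ be Borel on all of $\cx$: this forces $D$ to be a Borel member of $\ci$, which is exactly where the Borel-base hypothesis on $\ci$ is used. The remainder is cardinality bookkeeping, whose one non-automatic point is the smallness of $D\cap L_\alpha$ --- this does not follow from the bare Luzin property, but it does hold for the specific sets produced in the construction behind Corollary~\ref{conti}.
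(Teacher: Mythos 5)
Your proof is correct and follows essentially the same route as the paper: both reduce an $\ci$-measurable function to a Borel one off a Borel set $D\in\ci$ and then invoke Remark~\ref{stronger}. The only difference is cosmetic --- the paper applies the remark to a family of partial Borel functions whose domains have $\ci$-small complement, while you extend to total Borel functions by a constant on $D$ and make explicit the bound $|L_\alpha\cap D|<\kappa$ supplied by the construction, a bookkeeping detail the paper leaves implicit.
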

\begin{proof}
 First, let us notice that if a function $f$ is $\ci$-measurable then there exists
 a set $I\in\ci\cap\Borel(\cx)$ such that $f\upharpoonright (\cx\setminus I)$ is Borel.
 Indeed, it is enough to consider a countable base $\{U_n\}_{n\in\omega}$ of 
 topology of $\cx.$ Then $f^{-1}[U_n]=B_n\vartriangle I_n,$ where $B_n$ is Borel 
 and $I_n$ is from the ideal $\ci.$ Now, put $I=\bigcup_{n\in\omega}I_n.$
 
 So we can consider a family of partial Borel functions which domain is Borel set 
 with complement in the ideal $\ci.$ This family is naturally of size continuum.
 So we can use Corollary \ref{conti} and Remark \ref{stronger} to finish the proof.
\end{proof}

Now, let us concentrate on ideal of null and meager sets.
\begin{cor} 
\begin{enumerate}
 \item Assume that $cov(\bbl)=2^\omega.$ 
       There exists continuum many different $(2^\omega,\bbl,\bbk)$ - Luzin sets which 
       aren't equivalent with respect to the family of Lebesgue - measurable functions.
 \item Assume that $cov(\bbk)=2^\omega.$ 
       There exists continuum many different $(2^\omega,\bbk,\bbl)$ - Luzin sets which 
       aren't equivalent with respect to the family of Baire - measurable functions.
\end{enumerate}
\end{cor}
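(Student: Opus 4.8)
The plan is to reduce both statements directly to Corollary~\ref{conti2} applied to the appropriate pair of ideals, using two inequalities from the Cicho\'n diagram. First I would record the (standard) observation that a function $f\colon\cx\to\cx$ is Lebesgue measurable in the classical sense exactly when it is $\bbl$-measurable in the sense defined above, and similarly that $f$ is Baire measurable exactly when it is $\bbk$-measurable; this is because the $\sigma$-field generated by $\Borel(\cx)$ together with $\bbl$ (respectively $\bbk$) coincides with the $\sigma$-field of Lebesgue measurable (respectively Baire measurable) subsets of $\cx$. So ``equivalence with respect to the family of Lebesgue measurable functions'' means exactly ``equivalence with respect to all $\bbl$-measurable functions'', and likewise for the category case.

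For part (1), to invoke Corollary~\ref{conti2} with $\ci=\bbl$ and $\cj=\bbk$ I need the hypothesis $2^\omega=cov(\bbl)=non(\bbk)$. The standing assumption gives $cov(\bbl)=2^\omega$, and the Cicho\'n diagram inequality $cov(\bbl)\le non(\bbk)$ then forces $non(\bbk)=2^\omega$ as well. Thus Corollary~\ref{conti2} produces continuum many $(2^\omega,\bbl,\bbk)$-Luzin sets that are pairwise non-equivalent with respect to all $\bbl$-measurable, i.e.\ all Lebesgue measurable, functions, which is precisely the claim. Part (2) is entirely symmetric: taking $\ci=\bbk$ and $\cj=\bbl$ I need $2^\omega=cov(\bbk)=non(\bbl)$; the assumption supplies $cov(\bbk)=2^\omega$, and the Cicho\'n inequality $cov(\bbk)\le non(\bbl)$ gives $non(\bbl)=2^\omega$, after which Corollary~\ref{conti2} (together with the measurability observation above) finishes the proof.

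There is essentially no genuinely hard step here; the argument is a bookkeeping reduction. The only points requiring care are citing the Cicho\'n inequalities in the correct direction ($cov(\bbl)\le non(\bbk)$ and $cov(\bbk)\le non(\bbl)$, not their reverses) and verifying that the classical measure- and category-measurability notions coincide with the ideal-theoretic $\ci$-measurability used throughout Section~2, so that Corollary~\ref{conti2} really applies to the stated function classes.
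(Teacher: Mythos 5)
Your proposal is correct and follows essentially the same route as the paper: both reduce to Corollary~\ref{conti2} via the Cicho\'n diagram (Rothberger) inequalities $cov(\bbl)\le non(\bbk)$ and $cov(\bbk)\le non(\bbl)$, cited in the correct direction. Your extra remark identifying Lebesgue/Baire measurability with $\bbl$-/$\bbk$-measurability is a point the paper leaves implicit, but it is standard and does not change the argument.
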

\begin{proof}
Let us notice that the equality $cov(\bbl)=2^\omega$ implies that 
$2^\omega=cov(\bbl)=cof(\bbl)=non(\bbk).$ 
Similarly, the equality $cov(\bbk)=2^\omega$ implies that 
$2^\omega=cov(\bbk)=cof(\bbk)=non(\bbl)$ (see \cite{cichondiagram}).
Corollary \ref{conti2} finishes the proof.  
\end{proof}

\section{Luzin sets and forcing}
Now, let us focus on the class of forcings which preserves being $(\ci,\cj)$-Luzin set. Let us start with a technical observation.

\begin{lemma}\label{jacek} Assume that  $\ci$ has Fubini property. 
Suppose that $\bbp_\ci=\Borel(\cx)\setminus \ci$ is a proper definable forcing.  
Let $B\in \ci$ be a set in $V^{\bbp_\ci}[G].$ Then $B\cap \cx^V\in \ci$. 
\end{lemma}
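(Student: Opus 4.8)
The plan is to work in the extension $V^{\bbp_\ci}[G]$, where we are handed a set $B$ lying in the ideal $\ci$. Since $\ci$ has a Borel base and coding of Borel sets from $\ci$ is part of the definable structure (the forcing $\bbp_\ci$ being definable means $\cx$ and $\ci$ have absolute definitions for transitive models of ZF of the same height), I would first replace $B$ by a Borel superset $\#c\in\ci$ with a Borel code $c\in\omega^\omega$ lying in $V^{\bbp_\ci}[G]$; it suffices to prove $\#c\cap\cx^V\in\ci$. So from now on we may assume $B=\#c$ is Borel and coded in the extension.

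Next I would pass to a countable elementary submodel or, more concretely, use properness to find an intermediate countable situation: since $\bbp_\ci$ is proper, the code $c$ — a single real — together with any countable amount of the generic data sits in a forcing extension $V[G']$ by a countable sub-poset, but the cleanest route is the following. Consider the Borel set $A=\{(x,y)\in\cx\times\cx : y\in\#x\}$ — more precisely the Borel ``membership'' set coding, for a Borel code $x$, the set $\#x$. The key observation is that for our fixed code $c$ we want $\#c\cap\cx^V$ to be small in $\ci$, and the natural handle is the Fubini property of $\ci$ applied to the set $A$. The idea is: in $V$, consider the Borel set $D=\{(x,y): x \text{ codes a set in }\ci,\ y\in\#x\}$; for each Borel code $x$ with $\#x\in\ci$ the vertical section $D_x=\#x$ is in $\ci$, so by the Fubini property (applied after checking that $\{x : D_x\notin\ci\}\in\ci$, which holds since it is empty) we get $\{y : D^y\notin\ci\}\in\ci$ — that is, the set of $y$ that lie in $\#x$ for a ``large'' (non-$\ci$) set of codes $x$ is itself in $\ci$. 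The substantive step is then to argue that every $y\in\#c\cap\cx^V$ forces, via the generic, membership of $y$ in $\#x$ for a non-$\ci$ set of ground-model codes $x$ — equivalently, that the set of conditions (Borel sets not in $\ci$) $p$ with $y\in p$ is ``$\bbp_\ci$-generically large'', which for the definable forcing $\bbp_\ci=\Borel(\cx)\setminus\ci$ is exactly the statement that $y$ is not covered by any ground-model $\ci$-Borel set, combined with absoluteness of the $\ci$-coding.

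I expect the main obstacle to be this last point: relating membership of a ground-model point $y$ in the extension-coded Borel set $\#c$ to a statement purely about ground-model Borel codes, so that the Fubini property can be invoked in $V$. Concretely one wants: if $y\in\cx^V\setminus I$ for every $I\in\ci\cap\Borel(\cx)^V$, then $y\notin\#c$ for \emph{every} $\ci$-Borel code $c$ in the extension; and the remaining $y$'s (those caught by some ground-model $\ci$-Borel set) already form an $\ci$-set by $\mathrm{add}(\ci)$/$\mathrm{cof}(\ci)$ considerations together with properness (properness and the $\omega_1$-preservation let the relevant union stay manageable, or one uses that $\ci$ has a Borel base computed absolutely). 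Making the first implication precise requires a genericity/absoluteness argument: a condition $p\in\bbp_\ci$ with $p\le \#c$ (in the extension) and $y\in p$ would have to reflect to the ground model by properness, contradicting that $y$ avoids all ground-model $\ci$-Borel sets. Once that dichotomy is in place, $B\cap\cx^V$ is contained in the union of the ``absorbed'' ground-model $\ci$-set and the Fubini-null set of the previous paragraph, hence lies in $\ci$, completing the proof.
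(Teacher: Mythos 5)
There is a genuine gap here, in fact two. First, the Fubini property is assumed only for \emph{Borel} subsets of $\cx\times\cx$, but your set $D=\{(x,y):\ \#x\in\ci,\ y\in\#x\}$ is not Borel: the collection of Borel codes of sets in $\ci$ is in general properly coanalytic or worse (already for the null and meager ideals), so the Fubini step you rely on is not available for $D$. There is also a type mismatch: the first coordinate of $D$ ranges over codes in $\omega^\omega$, on which $\ci$ is not an ideal, so ``$\{x: D_x\notin\ci\}\in\ci$'' does not literally parse. Second, and more seriously, your concluding dichotomy is vacuous: since $[\cx]^{\omega}\subseteq\ci$, \emph{every} $y\in\cx^V$ is covered by a ground-model Borel $\ci$-set, namely a Borel superset of $\{y\}$ in $\ci$, so the set of $y$ ``caught by some ground-model $\ci$-Borel set'' is all of $\cx^V$, and asserting that this residue ``already forms an $\ci$-set by $add(\ci)/cof(\ci)$ considerations together with properness'' either begs the question or is simply false. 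The implication you yourself flag as the main obstacle is never established, and as set up it cannot be.

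The idea you are missing is how to get a single \emph{ground-model} Borel set into play so that Fubini can be applied in $V$. Since $\bbp_\ci=\Borel(\cx)\setminus\ci$ is definable, the new Borel set $B\in\ci$ is the section $C_{\dot{r}_G}$ at the generic real of one Borel set $C\subseteq\cx\times\cx$ coded in $V$, which may be taken with $\{x: C_x\notin\ci\}\in\ci$; Fubini applied to this genuine Borel set $C$ gives $\{x: C^x\notin\ci\}\in\ci$ already in $V$. Then for a ground-model point $x\in B\cap\cx^V$ one computes $0<\lbv x\in\dot{B}\rbv=\lbv(\dot{r},x)\in C\rbv=\lbv\dot{r}\in C^x\rbv=[C^x]_\ci$, which forces $C^x\notin\ci$; hence $B\cap\cx^V\subseteq\{x: C^x\notin\ci\}$, a set in $\ci$ coded in $V$. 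Your instinct to apply Fubini to a two-variable Borel set was right, but the correct set is the ground-model planar set $C$ whose generic section is $B$, not a universal membership relation on codes.
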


\begin{proof} Let  $\dot{B}$ -- name for $B$, $\dot{r}$ -- canonical name for 
generic real, $C\subseteq \cx\times\cx$ - Borel set from the ideal $\ci.$
$C$ is coded in ground model $V$ and $B=C_{\dot{r}_G}.$ 

Now by Fubini property:
$$
\{ x:\;\; C^x\notin \ci\}\in \ci.
$$
Let $x\in B\cap\cx^V$ then $V[G]\models x\in B$
$$
0<\lbv x\in \dot{B}\rbv =\lbv x\in C_{\dot{r}}\rbv =\lbv (\dot{r},x)\in C\rbv=\lbv\dot{r}\in C^x\rbv =[C^x]_\ci
$$
Then we have: 
$$
B\cap\cx^V\subseteq\{ x: C^x\notin \ci\}\in \ci.
$$
But the last set is coded in ground model because the set $C$ was coded in $V$.
\end{proof}

\begin{theorem} Assume that $\omega<\kappa$ and $\ci, \cj$ are c.c.c. and have Fubini property. Suppose that $\bbp_\ci=\Borel(\cx)\setminus \ci$ and $\bbp_\cj=\Borel(\cx)\setminus \cj$ are definable forcings. Then $\bbp_\cj$ preserves $(\kappa,\ci,\cj)$ - Luzin set property.
\end{theorem}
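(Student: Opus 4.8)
The plan is to take an arbitrary $(\kappa,\ci,\cj)$-Luzin set $L$ in the ground model $V$, let $G$ be $\bbp_\cj$-generic over $V$, and verify that $L$ still satisfies the three clauses in $V[G]$: that $|L|=\kappa$, that $L\notin\ci$, and that $L\cap B\in\cj$ for every $B\in\ci$. Since both ideals have Borel bases, in the last two clauses it suffices to quantify over Borel sets.

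First I would dispose of cardinality: $\cj$ is c.c.c., hence $\bbp_\cj$ is a c.c.c.\ forcing, hence cardinal-preserving, so $|L|^{V[G]}=|L|^V=\kappa$. The real work is to show that no member of $\ci$ newly appearing in $V[G]$ can spoil $L$, and the engine is the trace estimate of Lemma~\ref{jacek}: for every Borel $B$ with $V[G]\models B\in\ci$ there is a Borel set $I_0\in\ci$ \emph{coded in $V$} with $B\cap\cx^V\subseteq I_0$. Granting this, the two remaining clauses fall out of the hypothesis on $L$ in $V$ together with absoluteness of the Borel codings of $\ci$ and of $\cj$. For $L\notin\ci$: if $V[G]\models L\in\ci$, pick a Borel $B\supseteq L$ with $B\in\ci$; then $L\subseteq B\cap\cx^V\subseteq I_0\in\ci$, so $V\models L\in\ci$, contradicting that $L$ is $(\ci,\cj)$-Luzin in $V$. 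For the intersection clause: given Borel $B$ with $V[G]\models B\in\ci$ we have $L\cap B\subseteq L\cap I_0$; since $I_0\in\ci$ and $L$ is $(\ci,\cj)$-Luzin in $V$, we get $V\models L\cap I_0\in\cj$, hence $L\cap I_0\subseteq M_0$ for some Borel $M_0\in\cj$ coded in $V$; by absoluteness of the $\cj$-coding $V[G]\models M_0\in\cj$, and therefore $V[G]\models L\cap B\in\cj$.

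Thus everything reduces to the trace estimate, and establishing it is the step I expect to be the main obstacle. I would follow the proof of Lemma~\ref{jacek}: since $\bbp_\cj$ is definable, $V[G]=V[r]$ for the generic real $r$, so every real of $V[G]$ — in particular a Borel code for $B$ — is $h(r)$ for some ground-model Borel $h\colon\cx\to\omega^\omega$, giving a ground-model Borel $C\subseteq\cx\times\cx$ with $B=C_r$; I would then cut $C$ down to a ground-model Borel $C'$ whose vertical sections all lie in $\ci$, apply the Fubini property of $\ci$ to obtain $I_0=\{y:(C')^y\notin\ci\}\in\ci$ coded in $V$, and check $B\cap\cx^V\subseteq I_0$ through the Boolean value $\lbv y\in\dot B\rbv=\lbv\dot r\in C^y\rbv$. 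The subtle point — absent in Lemma~\ref{jacek}, where one forces with $\bbp_\ci$ — is that here this Boolean value lives in $\Borel(\cx)/\cj$, so a priori it only yields information about $C^y$ and about the sections $C_x$ modulo $\cj$, whereas the conclusion must be modulo $\ci$. Bridging this gap is where the Fubini properties of \emph{both} ideals, together with the absoluteness of the $\ci$-coding and the interaction of $\ci$ with $\cj$, have to be used; I expect this to be the only genuinely delicate part, after which the two deductions above finish the argument.
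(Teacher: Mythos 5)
Your reduction of the theorem to a trace estimate is the right skeleton (and you treat the clauses $|L|=\kappa$ and $L\notin\ci$, which the paper's one-line proof does not even mention), and you have correctly located the point where Lemma~\ref{jacek} stops helping: when one forces with $\bbp_\cj$ the value $\lbv y\in\dot B\rbv=[C^y]_\cj$ lives in $\Borel(\cx)/\cj$, so its positivity only yields $C^y\notin\cj$, whereas the trace estimate needs a conclusion modulo $\ci$. But you leave exactly this step as ``the only genuinely delicate part'' to be bridged by some unspecified interaction of the two Fubini properties, and that gap cannot be closed: the trace estimate is false already in the flagship instance $\ci=\bbl$, $\cj=\bbk$, where $\bbp_\cj$ is Cohen forcing. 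Take a comeager null Borel set $D\subseteq 2^\omega$ coded in $V$, let $c$ be Cohen over $V$, and put $B=c+D$. Then $B$ is a null Borel set in $V[c]$; but for every $y\in 2^\omega\cap V$ the set $y+D$ is comeager and coded in $V$, so $c\in y+D$, i.e.\ $y\in B$. Hence $B\cap\cx^V=\cx^V$, which is contained in no null Borel set coded in $V$ (otherwise $2^\omega$ would be null in $V$). Worse, the theorem itself fails for this pair: a Cohen real puts all of $2^\omega\cap V$ inside the null set $\{x:(\exists^\infty n)\ x\restrict I_n=c\restrict I_n\}$ (with $|I_n|=n+1$), so any ground-model $(\kappa,\bbl,\bbk)$-Luzin set $L$ satisfies $L\in\bbl$ in $V[c]$ and the clause $L\notin\ci$ is destroyed.

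For what it is worth, the paper's own proof makes precisely the leap you flagged: it asserts ``$L\cap B\in\ci$ in $V$'' and appeals to Lemma~\ref{jacek}, but that lemma is stated and proved only for forcing with $\bbp_\ci$, and its argument does not transfer to $\bbp_\cj$ for the reason you identified. The hypothesis that is actually needed is the one the authors isolate later for the measure--category case: a preservation property of the forcing guaranteeing that $\cx^V$ stays $\ci$-large (Fact~\ref{outer} and Theorem~\ref{random_luzin}), which random and Laver forcing satisfy for $\bbl$ but Cohen forcing does not. So your plan is faithful to the paper's strategy, but the step you deferred is not merely delicate --- it is unprovable from c.c.c., definability and the two Fubini properties alone.
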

\begin{proof} Let $L$ be a $(\kappa,\ci,\cj)$ - Luzin set in $V$. In $V[G]$ take any $B\in \ci$ then $L\cap B\cap V=L\cap B$ but $L\cap B\in \ci$ in $V$ so $L\cap B\in \cj$ in $V$ by definition of $L.$ Finally, by Lemma \ref{jacek}
$$
L\cap B=L\cap B\cap V\in \cj\text{ in } V[G].
$$
\end{proof}

\begin{theorem} 
 Let $(\bbp,\le)$ be a forcing notion such that
 $$
 \{ B : B\in\ci\cap \Borel(\cx), B \text{ is coded in } V\}
 $$  
 is a base for $\ci$ in $V^\bbp[G].$ 
 Assume that Borel codes for sets from ideals $\ci,\cj$ are absolute. 
 Then $(\bbp,\le)$ preserve being $(\ci,\cj)$ - Luzin sets.
\end{theorem}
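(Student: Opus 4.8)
The plan is to mimic the proof of the previous theorem, replacing the appeal to Lemma \ref{jacek} with the hypothesis that ground-model-coded Borel sets from $\ci$ form a base in the extension, and using absoluteness of Borel codes to control membership in $\cj$. Let $L$ be a $(\ci,\cj)$-Luzin set in $V$. Working in $V^\bbp[G]$, I would first check that $L \notin \ci$ there: if $L$ were covered by some $B \in \ci \cap \Borel(\cx)$ in the extension, then by hypothesis we may take $B$ to be coded in $V$, and absoluteness of the $\ci$-code gives $V \models \#x \in \ci$ for its code $x$; but then $L \subseteq \#x$ already in $V$, contradicting $L \notin \ci$ in $V$. (Here I am implicitly using that $L \subseteq B$ is absolute, which is immediate since it is a first-order statement about the membership relation applied to a set $L$ that does not change.)

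Next I would verify the covering condition. Fix $B \in \ci$ in $V^\bbp[G]$; by the base hypothesis there is $B' \in \ci \cap \Borel(\cx)$ coded in $V$ with $B \subseteq B'$, so it suffices to show $L \cap B' \in \cj$ in $V^\bbp[G]$. Let $x \in V \cap \omega^\omega$ be a Borel code for $B'$. By absoluteness of coding Borel sets from $\ci$ we have $V \models \#x \in \ci$, hence $L \cap \#x \in \cj$ in $V$ by the Luzin property of $L$ in $V$. Now $L \cap \#x$ is a subset of $L$, and I want to transfer ``$\in \cj$'' upward. The cleanest way: since $\cj$ has a Borel base, in $V$ there is a Borel code $y \in V \cap \omega^\omega$ with $L \cap \#x \subseteq \#y$ and $V \models \#y \in \cj$; by absoluteness of coding Borel sets from $\cj$, also $V^\bbp[G] \models \#y \in \cj$. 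Since $L \cap \#x \subseteq \#y$ is absolute, $L \cap B' \subseteq L \cap \#x \subseteq \#y \in \cj$ in the extension, so $L \cap B \in \cj$ there. This gives that $L$ is $(\ci,\cj)$-Luzin in $V^\bbp[G]$.

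The main subtlety — and the step I would be most careful about — is the use of absoluteness. The definition of ``coding Borel sets from $I$ is absolute'' in the excerpt is phrased for standard transitive models $M \subseteq N$ of ZF, whereas here we have a forcing extension $V \subseteq V^\bbp[G]$; I would note that forcing extensions are exactly such pairs (or restrict to a suitable pair of models / take $\mathrm{ZF}$ in the metatheoretic sense used throughout the paper), so the definition applies. A second point worth a sentence is that the base hypothesis only guarantees a ground-model-coded Borel \emph{cover} $B'$ of $B$, not that $B$ itself is ground-model-coded; that is why I reduce to $L \cap B'$ rather than $L \cap B$ directly, and the monotonicity $L \cap B \subseteq L \cap B'$ then finishes it. No Fubini property or c.c.c. is needed here — the hypothesis that ground-model Borel sets from $\ci$ remain a base does all the work that Lemma \ref{jacek} did in the previous theorem.
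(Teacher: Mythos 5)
Your proof is correct and follows essentially the same route as the paper's: pass from $B\in\ci$ in the extension to a ground-model-coded Borel cover $\#b$, use absoluteness of $\ci$-codes to apply the Luzin property of $L$ in $V$, take a ground-model Borel $\cj$-cover of $L\cap\#b$, and transfer it upward by absoluteness of $\cj$-codes. Your additional verification that $L\notin\ci$ in $V^{\bbp}[G]$ is a point the paper's proof leaves implicit, and your remarks on applying the transitive-model formulation of absoluteness to the pair $V\subseteq V^{\bbp}[G]$ are sensible but do not change the argument.
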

\begin{proof}
Let $L$ be a $(\ci,\cj)$ - Luzin set in ground model $V.$
We will show that $V^\bbp[G]\models L\text{ is }(\ci,\cj)\text{ - Luzin set}.$

Let us work in $V^\bbp[G].$ Fix $I\in\ci.$ $\ci$ has Borel base consisting of sets 
coded in $V$. So, there exists 
$b\in\omega^\omega\cap V$ such that $I\subseteq\#b\in\ci.$ 

By absoluteness of Borel codes from $\ci$ we have that $V\models\#b\in\ci.$
$L$ is a $(\ci,\cj)$ - Luzin set in the model $V.$ So, there is 
$c\in \omega^\omega\cap V$ which codes Borel set from the ideal $\cj$ 
such that $V\models L\cap\#b\subseteq\#c.$ By absoluteness of Borel codes from $\cj$ we get that
$$
V^\bbp[G]\models L\cap B\subseteq L\cap\#b\subseteq\#c\in\cj,
$$ 
what proves that $L$ is a $(\ci,\cj)$ - Luzin set in generic extension.
\end{proof}

The above theorem gives us a series of corollaries.
\begin{cor}\label{same-reals} 
 Let $(\bbp,\le)$ be any forcing notion which does not 
 change the reals i. e. $(\omega^\omega)^V=(\omega^\omega)^{V^\bbp[G]}.$ 
 Assume that Borel codes for sets from ideals $\ci,\cj$ are absolute. 
 Then $(\bbp,\le)$ preserve being $(\ci,\cj)$ - Luzin sets.
\end{cor}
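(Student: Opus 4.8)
The plan is to derive this from the preceding theorem, so the only work is to verify its hypothesis: that the family
$$
\cb=\{ B : B\in\ci\cap\Borel(\cx),\ B\text{ is coded in }V\}
$$
is a base for $\ci$ in $V^\bbp[G]$. The absoluteness of Borel codes for $\ci$ and $\cj$ is assumed in the statement, so once this is done the preceding theorem applies and delivers the conclusion directly.

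First I would use the hypothesis that $(\bbp,\le)$ adds no reals, i.e.\ $(\omega^\omega)^V=(\omega^\omega)^{V^\bbp[G]}$. Every Borel subset of $\cx$ is of the form $\#x$ for some $x\in\omega^\omega$, and since the Borel coding is absolute between standard transitive models of ZF of the same height (in particular between $V$ and $V^\bbp[G]$), every Borel set living in $V^\bbp[G]$ is already of the form $\#b$ for some $b\in\omega^\omega\cap V$. Combining this with the absoluteness of Borel codes for $\ci$, the members of $\cb$ are precisely those Borel sets $\#b$ with $b\in\omega^\omega\cap V$ for which $V^\bbp[G]\models\#b\in\ci$.

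Next I would invoke the standing assumption that $\ci$ has a Borel base, now read \emph{inside} $V^\bbp[G]$: given $I\in\ci$ there, there is a Borel set $B\in\ci$ (as computed in $V^\bbp[G]$) with $I\subseteq B$; by the previous step $B=\#b$ for some $b\in\omega^\omega\cap V$ with $V\models\#b\in\ci$, so $B\in\cb$. Hence $\cb$ is a base for $\ci$ in $V^\bbp[G]$, and the preceding theorem yields that $(\bbp,\le)$ preserves being a $(\ci,\cj)$ - Luzin set.

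The one point deserving care — and the step I would be most careful about — is that the Borel-base property of $\ci$ has to be used in the extension $V^\bbp[G]$, not merely in $V$; this is legitimate because the standing hypotheses imposed on $\ci$ and $\cj$ (closure under countable unions and subsets, containment of all countable subsets of $\cx$, and the existence of a Borel base) are properties of their defining formulas that are assumed to hold in every model of ZF under consideration, but it should be stated explicitly, since without it one could not conclude that $\cb$ exhausts $\ci$ as computed in $V^\bbp[G]$.
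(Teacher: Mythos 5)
Your proposal is correct and follows exactly the route the paper intends: the corollary is derived from the preceding theorem by observing that, since no new reals appear, every Borel set in $V^\bbp[G]$ has a code in $V$, so the ground-model-coded Borel sets in $\ci$ form a base for $\ci$ in the extension. Your explicit remark that the Borel-base property of $\ci$ must be read inside $V^\bbp[G]$ is a point the paper leaves tacit, but it does not change the argument.
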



\begin{cor}\label{wniosek1} Assume that $(\bbp,\le)$ is a $\sigma$-closed forcing and Borel codes for sets from ideals $\ci,\cj$ are absolute. Then $(\bbp,\le)$ preserve $(\ci,\cj)$ - Luzin sets.
\end{cor}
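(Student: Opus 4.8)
The plan is to reduce the statement immediately to Corollary \ref{same-reals}. The only extra ingredient needed is the classical fact that a $\sigma$-closed forcing does not change the reals, that is $(\omega^\omega)^V=(\omega^\omega)^{V^\bbp[G]}$; once this is in hand the corollary applies verbatim, since the hypothesis on absoluteness of Borel codes for $\ci$ and $\cj$ is exactly the one required by Corollary \ref{same-reals}.

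To verify that $\bbp$ adds no new reals I would argue as follows (this is standard, see \cite{jech}). Fix a $\bbp$-name $\dot{x}$ and a condition $p$ with $p\force\dot{x}\in\omega^\omega$. Given any $p'\le p$, working in $V$ I would recursively build a decreasing chain $p'=q_0\ge q_1\ge q_2\ge\dots$ in $\bbp$ such that $q_{n+1}$ decides the value $\dot{x}(n)$, say $q_{n+1}\force\dot{x}(n)=k_n$; this is possible because at each stage we may first strengthen $q_n$ below $p$ and then pass to a condition deciding $\dot{x}(n)$. By $\sigma$-closedness the sequence $(q_n)_{n\in\omega}$ has a lower bound $q\in\bbp$, and $q\force\dot{x}=\check{y}$ where $y=(k_n)_{n\in\omega}\in(\omega^\omega)^V$. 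Hence the set of conditions forcing $\dot{x}$ to be equal to some ground-model real is dense below $p$, so for any generic $G\ni p$ we get $\dot{x}_G\in V$. Thus $(\omega^\omega)^{V^\bbp[G]}\subseteq V$, and the reverse inclusion is trivial.

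With $(\omega^\omega)^V=(\omega^\omega)^{V^\bbp[G]}$ established, Corollary \ref{same-reals} applied to $(\bbp,\le)$ — using the assumed absoluteness of Borel codes for $\ci$ and $\cj$ — yields at once that $(\bbp,\le)$ preserves being an $(\ci,\cj)$ - Luzin set. I do not expect any real obstacle: the substantive work is already contained in the theorem preceding Corollary \ref{same-reals}, and the only thing to be careful about is the bookkeeping in the no-new-reals argument, which is entirely routine.
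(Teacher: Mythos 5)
Your proposal is correct and matches the paper's intended argument: the paper states Corollary \ref{wniosek1} without proof, precisely because it follows from Corollary \ref{same-reals} together with the standard fact that $\sigma$-closed forcings add no new elements of $\omega^\omega$, which is exactly the reduction you carry out. Your density argument for the no-new-reals fact is the standard one and is correct.
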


\begin{cor}\label{iteracja} Let $\lambda\in On$ be an ordinal number. 
Let $\bbp_\lambda=\< (P_\alpha,\dot{Q}_\alpha):\;\;\alpha<\lambda\>$ be iterated 
forcing with countable support. Spouse that 
\begin{enumerate}
 \item for any $\alpha<\lambda$ $P_\alpha\force \dot{Q}_\alpha - \sigma\text{ closed }$,
 \item Borel codes for sets from ideals $\ci, \cj$ are absolute,
\end{enumerate}
then $\bbp_\lambda$ preserve $(\ci, \cj)$ - Luzin sets.
\end{cor}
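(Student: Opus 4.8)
The plan is to reduce everything to Corollary~\ref{wniosek1}. That corollary already tells us that a $\sigma$-closed forcing, under the assumed absoluteness of Borel codes for $\ci$ and $\cj$, preserves $(\ci,\cj)$ - Luzin sets; so it suffices to prove the purely forcing-theoretic statement that a countable support iteration $\bbp_\lambda$ of iterands which are forced to be $\sigma$-closed is itself $\sigma$-closed. I would establish this by induction on $\lambda$, using assumption (1) (assumption (2) enters only at the very end, when invoking Corollary~\ref{wniosek1}).

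For the successor step $\bbp_{\alpha+1}=P_\alpha\ast\dot Q_\alpha$: given a descending $\omega$-chain $\langle (p_n,\dot q_n):n<\omega\rangle$, first use the inductive hypothesis that $P_\alpha$ is $\sigma$-closed to pick $p\in P_\alpha$ with $p\le p_n$ for all $n$. Then $p\force_{P_\alpha}$ that $\langle\dot q_n\rangle_{n<\omega}$ is a descending $\omega$-chain in $\dot Q_\alpha$, and since $P_\alpha\force\dot Q_\alpha$ is $\sigma$-closed, $p$ forces this chain to have a lower bound; by the maximal principle choose a $P_\alpha$-name $\dot q$ for such a bound, so $(p,\dot q)\le (p_n,\dot q_n)$ for all $n$. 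For a limit $\delta\le\lambda$ with $\mathrm{cf}(\delta)>\omega$: any descending $\omega$-chain $\langle p_n\rangle$ in $\bbp_\delta$ has $\bigcup_n\mathrm{supp}(p_n)$ a countable subset of $\delta$, hence bounded by some $\alpha<\delta$; the chain therefore lies in (the canonical copy of) $\bbp_\alpha$ and the inductive hypothesis yields a lower bound there, which extends trivially above $\alpha$ to a lower bound in $\bbp_\delta$.

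The main obstacle is the limit case with $\mathrm{cf}(\delta)=\omega$ (in particular $\delta=\omega$), where $s:=\bigcup_n\mathrm{supp}(p_n)$ is countable but may be cofinal in $\delta$, so a lower bound must be built by recursion. I would construct $p$ with $\mathrm{supp}(p)\subseteq s$ by transfinite recursion along $s$: having defined $p\restriction\alpha$ with $p\restriction\alpha\force_{P_\alpha} p\restriction\alpha\le p_n\restriction\alpha$ for all $n$, observe that $p\restriction\alpha$ forces $\langle p_n(\alpha)\rangle_{n<\omega}$ to be a descending $\omega$-chain in the $\sigma$-closed forcing $\dot Q_\alpha$, so it forces this chain to have a lower bound, and by the maximal principle we may choose a $P_\alpha$-name $p(\alpha)$ for such a lower bound (taking $p(\alpha)=\dot 1$ for $\alpha\notin s$). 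A routine verification, using that a countable support condition is determined by its restrictions and that each $\mathrm{supp}(p_n)\subseteq s$, shows $p\in\bbp_\delta$ and $p\le p_n$ for every $n$.

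This completes the induction, so $\bbp_\lambda$ is $\sigma$-closed. Together with assumption (2), Corollary~\ref{wniosek1} then gives that $\bbp_\lambda$ preserves $(\ci,\cj)$ - Luzin sets, which is the assertion. I expect the only delicate point to be bookkeeping in the $\mathrm{cf}(\delta)=\omega$ recursion (keeping the support inside $s$ and correctly threading the forcing relations through the names), everything else being standard.
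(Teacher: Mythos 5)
Your proposal is correct and follows essentially the same route as the paper: reduce to Corollary~\ref{wniosek1} by observing that a countable support iteration of $\sigma$-closed iterands is $\sigma$-closed. The paper simply cites this closure fact without proof, whereas you supply the standard inductive argument for it; the substance is the same.
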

\begin{proof} Our forcing $\bbp_\lambda$ is $\sigma$-closed because it is countable support iteration of $\sigma$ -closed forcings. So, we can apply Corollary~\ref{wniosek1}
to finish the proof.
\end{proof}

Now, let us consider some properties of countable support iteration connected with preservation of some relation.
We will follow notation given by Goldstern (see \cite{goldi}).

First, let us consider measure case. Let $\Omega$ is a family of clopen sets of Cantor space $2^\omega$ and
$$
C^{random}=\{ f\in \Omega^\omega:(\forall n\in\omega) \mu(f(n))<2^{-n}\}
$$ 
with discrite topology. 
If $f\in C^{random}$ then let us define the following set $A_f=\bigcap_{n\in\omega}\bigcup_{k\ge n} f(k)$.

Now, we are ready to define 
the following relation $\sqsubseteq=\bigcup_{n\in\omega} \sqsubseteq_n$ where
$$
(\forall f\in C^{random})(\forall g\in 2^\omega) (f\sqsubseteq_n g\iff (\forall k\ge n)\; g\notin f(k)).
$$

Definition of the notion of preservation of relation $\sqsubseteq^{random}$ by forcing notion $(\bbp,\le)$ can be found in paper \cite{goldi}. Let us focus on the following consequence of that definition.

\begin{fact}[Goldstern]\label{outer} If $(\bbp,\le)$ preserves $\sqsubseteq^{random}$ then $\bbp\force \mu^*(2^\omega\cap V)=1$.
\end{fact}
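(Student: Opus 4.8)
The plan is to split the statement into two steps: first, that $\bbp$ forces the ground-model reals $2^\omega\cap V$ to be non-null, and second, that non-nullity of $2^\omega\cap V$ already entails full outer measure, because $2^\omega\cap V$ is closed under finite modifications of its members.

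For the first step, recall the two facts that $A_f\in\bbl$ for every $f\in C^{random}$ and, conversely, that every null Borel set is contained in some $A_f$ with $f\in C^{random}$ --- i.e.\ the sets $A_f$ are cofinal in $\bbl$; this is a standard fact about the structure of the null ideal (see \cite{goldi}). Unwinding Goldstern's definition of ``$(\bbp,\le)$ preserves $\sqsubseteq^{random}$'' (see \cite{goldi}), the ground-model set $2^\omega\cap V$ --- which is $\sqsubseteq$-dominating in $V$, since there it equals $2^\omega$ and hence is contained in no $A_f$ --- remains $\sqsubseteq$-dominating in $V^\bbp[G]$; that is, $\bbp\force(\forall f\in C^{random})(\exists g\in 2^\omega\cap V)\,(f\sqsubseteq g)$. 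Since by the definitions $f\sqsubseteq g$ holds exactly when $g\notin A_f$, this says that in $V^\bbp[G]$ no set $A_f$ contains $2^\omega\cap V$; applying the cofinality just quoted inside $V^\bbp[G]$ gives that $2^\omega\cap V$ is contained in no null Borel set, i.e.\ $\bbp\force 2^\omega\cap V\notin\bbl$.

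For the second step, work in $V^\bbp[G]$ and put $X=2^\omega\cap V$. Then $X$ is closed under finite modifications (a finite modification of a ground-model real is again in $V$); equivalently $\chi_{\{n\}}+X=X$ for every $n$, where $+$ is coordinatewise addition modulo $2$ and $\chi_{\{n\}}$ is the sequence with its unique $1$ in position $n$. Let $M$ be a $G_\delta$ hull of $X$, that is, a $G_\delta$ set with $X\subseteq M$ and $\mu(M)=\mu^*(X)$ (intersect open supersets of $X$ whose measures tend to $\mu^*(X)$). For each $n$ the translate $\chi_{\{n\}}+M$ is again a $G_\delta$ hull of $X=\chi_{\{n\}}+X$, so $M\cap(\chi_{\{n\}}+M)$ is a Borel set containing $X$, whence of measure $\mu^*(X)=\mu(M)$; since $\mu(\chi_{\{n\}}+M)=\mu(M)$ as well, it follows that $M\vartriangle(\chi_{\{n\}}+M)\in\bbl$, i.e.\ $M$ is invariant modulo $\bbl$ under every single-coordinate flip. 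A routine density computation (the relative density $\mu(M\cap[s])/\mu([s])$ of $M$ in a basic clopen set $[s]$ is unchanged under flipping any bit of $s$, hence depends only on $|s|$, hence equals $\mu(M)$) together with the Lebesgue density theorem then forces $\mu(M)\in\{0,1\}$. By the first step $\mu^*(X)=\mu(M)>0$, so $\mu^*(X)=1$, as required.

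The only genuinely non-formal point is the zero--one law used in the second step: an arbitrary set of positive outer measure need not have full outer measure, so one really must exploit that $2^\omega\cap V$ absorbs all finite modifications of its elements. The first step is just the combination of the standard characterization of $\bbl$ by the sets $A_f$ with a direct reading of the preservation hypothesis, and carries no further difficulty; the auxiliary observations that each $A_f$ is null and that $2^\omega\cap V$ is closed under finite modifications are immediate.
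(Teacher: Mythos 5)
Your argument is correct, but note that the paper does not actually prove this Fact: it is quoted from Goldstern's \emph{Tools for your forcing construction} as a consequence of the (unreproduced) definition of preserving $\sqsubseteq^{random}$, so there is no in-paper proof to match yours against. What you supply is a legitimate self-contained derivation, and its two halves are both sound. The first half combines the intended consequence of Goldstern's definition --- that $2^\omega\cap V$ stays $\sqsubseteq$-dominating in $V^{\bbp}[G]$, which together with $f\sqsubseteq g\iff g\notin A_f$ says no $A_f$ of the extension covers the old reals --- with the Bartoszy\'nski-style fact that the sets $A_f$, $f\in C^{random}$, are cofinal in $\bbl$; this yields $\bbp\force 2^\omega\cap V\notin\bbl$. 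The second half, upgrading non-nullity to full outer measure via invariance of $2^\omega\cap V$ under finite modifications, a measurable hull, and the zero--one law for the group of finite coordinate changes (or the Lebesgue density computation you sketch), is the genuinely non-trivial point, and you are right to isolate it: positive outer measure alone would not suffice, and it is exactly the absoluteness of finite modifications of ground-model reals that makes the 0--1 law applicable. The one caveat worth recording is that your first step silently identifies ``$\bbp$ preserves $\sqsubseteq^{random}$'' with its weakest consequence (ground-model reals remain $\sqsubseteq$-dominating); Goldstern's actual definition is a stronger, iteration-friendly statement about names and interpretations, but the implication you use is exactly the one the definition is engineered to give, so nothing is lost.
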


Now, we say that forcing notion $\bbp$  preserves outer measure iff $\bbp$ preserves $\sqsubseteq^{random}$.

It is well known that Laver forcing preserves some stronger property than $\sqsubseteq^{random}$ (see \cite{judah}).
So, Laver forcing preserves outer measure.

In \cite{goldi} we can find the following theorem:
\begin{theorem}[Goldstern]\label{goldi_iteration} Let $\bbp_\lambda=((P_\alpha,Q_\alpha):\;\alpha<\gamma)$ be any countable support iteration such that
$$
(\forall \alpha<\gamma)\; P_\alpha\force Q_\alpha \text{ preserves }\sqsubseteq^{random} 
$$
then $\bbp_\gamma$ preserves the relation $\sqsubseteq^{random}$.
\end{theorem}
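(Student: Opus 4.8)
The plan is to proceed by induction on the length $\gamma$ of the iteration, following the standard template for preservation theorems for countable support iterations of proper forcings as developed in \cite{goldi}; rather than redoing the abstract machinery I would verify that $\sqsubseteq^{random}$ meets its hypotheses (the $\sqsubseteq_n$ closed and increasing, plus the ``halving'' property of $C^{random}$) and then follow it. Since preservation of $\sqsubseteq^{random}$ is set up within the class of proper forcings, $\bbp_\gamma$ is proper, so every $\bbp_\gamma$-name for a point of $2^\omega$ is equivalent to a $\bbp_\alpha$-name for some $\alpha<\gamma$ when $\mathrm{cf}(\gamma)>\omega$; this settles the limit case of uncountable cofinality at once, and the successor case reduces to a two-step composition lemma. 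Thus the whole weight of the argument falls on the limit case $\mathrm{cf}(\gamma)=\omega$.

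For the composition step I would prove: if $\bbp$ preserves $\sqsubseteq^{random}$ and $\bbp\force\dot Q$ preserves $\sqsubseteq^{random}$, then so does $\bbp*\dot Q$. Fixing a countable $N\prec H(\theta)$ containing the data, a condition $(p,\dot q)$, a name $\dot g\in N$ for a point of $2^\omega$, and $f\in C^{random}$ with $A_f\cap N=\emptyset$ (the ``covering position'' in the $N$-formulation of preservation), the idea is to first pass to an $(N,\bbp)$-generic extension below $p$ — checking that $f$ is still in covering position over the extended model, which is the one delicate point — and then apply preservation by $\dot Q$ there to obtain, below a strengthening of $\dot q$, an appropriately generic condition forcing $f\sqsubseteq^{random}\dot g$; reassembling yields the required $(N,\bbp*\dot Q)$-generic condition.

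The heart of the matter is the limit case $\mathrm{cf}(\gamma)=\omega$. Fix an increasing sequence $(\alpha_n)_n$ cofinal in $\gamma$, a condition $p\in\bbp_\gamma$, a $\bbp_\gamma$-name $\dot g$ for a point of $2^\omega$, and $f\in C^{random}$ such that $A_f=\bigcap_{n}\bigcup_{k\ge n}f(k)$ satisfies $A_f\cap N=\emptyset$, where $N\prec H(\theta)$ is countable and contains $p,\dot g,f,(\alpha_n)_n,\bbp_\gamma$. The plan is to build, inside $N$, a fusion sequence $p=p_0\ge p_1\ge\cdots$ in which, at stage $n$, one works in $\bbp_{\alpha_n}$ below $p_n\restrict\alpha_n$ and, using the inductive hypothesis for the shorter iteration $\bbp_{\alpha_n}$ together with the single-step preservation of $\sqsubseteq^{random}$ granted by the hypothesis on the iterands, extends $p_n$ so as to (i) maintain $(N,\bbp_{\alpha_n})$-genericity, (ii) pin down more of $\dot g$, and (iii) keep alive the requirement that $\dot g$ avoids $f(k)$ for all large $k$. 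By countable support, $q:=\bigwedge_n p_n$ is a genuine condition below $p$, it is $(N,\bbp_\gamma)$-generic, and a density argument then gives $q\force(\exists m)(\forall k\ge m)\,\dot g\notin f(k)$, that is, $q\force f\sqsubseteq^{random}\dot g$.

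The main obstacle I anticipate is precisely the bookkeeping in this fusion, where two features of $\sqsubseteq^{random}$ must be used. First, each $\sqsubseteq_n$ is closed (for fixed $f$, $\{g:f\sqsubseteq_n g\}$ is an intersection of clopen sets) and the $\sqsubseteq_n$ increase with $n$: this is what makes the partial information on $\dot g$ accumulated along the fusion match $f$ coherently in the limit. Second, the measure structure of $C^{random}$ — the constraints $\mu(f(k))<2^{-k}$ leave a finite total ``slack'' that can be redistributed — lets the countably many small errors introduced at the successive stages be absorbed into a single $f'\in C^{random}$ (still satisfying $\mu(f'(k))<2^{-k}$) that does the covering. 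Arranging the fusion so that simultaneously $q$ remains a legitimate condition, enough of $\dot g$ is decided at each stage to run the single-step preservation, and the countably many measure constraints add up correctly is the technical crux; it is carried out in full in \cite{goldi}.
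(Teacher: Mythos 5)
The paper does not prove this statement at all: it is imported verbatim from Goldstern's paper and used as a black box, so there is no internal proof to compare against. Your outline is the correct skeleton of the argument as it appears in \cite{goldi} -- induction on the length, a two-step composition lemma for successors, reflection of names for reals to earlier stages at limits of uncountable cofinality, and a fusion argument at limits of cofinality $\omega$ -- and you correctly isolate the two features of $\sqsubseteq^{random}$ that the abstract machinery needs (each $\sqsubseteq_n$ closed and the family increasing, plus the summable measure budget $\mu(f(k))<2^{-k}$ that lets countably many errors be absorbed into a single new $f'\in C^{random}$).

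That said, what you have written is a proof plan, not a proof. The two places you yourself identify as ``the delicate point'' and ``the technical crux'' -- verifying that the covering position of $f$ over $N$ survives passage to the intermediate $(N,\bbp)$-generic extension in the composition step, and the fusion bookkeeping that simultaneously maintains $(N,\bbp_{\alpha_n})$-genericity, decides enough of $\dot g$ to apply the single-step hypothesis, and keeps the measure constraints summing correctly -- are exactly where all the mathematical content of the theorem lives, and both are deferred to \cite{goldi} rather than carried out. Since the theorem is attributed and cited, this is an acceptable way to treat it in the present paper (indeed it is what the authors do), but as a standalone proof attempt it establishes nothing beyond the statement of the strategy.
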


\begin{theorem}\label{random_luzin}
 Assume that $\bbp$ is a forcing notion which preserves $\sqsubseteq^{random}$. Then $\bbp$ preserves being 
 $(\bbl, \bbk)$-Luzin set.
\end{theorem}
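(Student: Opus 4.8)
The plan is to reduce the $(\bbl,\bbk)$-Luzin property to the conjunction ``non-null and meager'' and then to preserve those two conditions separately. The preliminary observation is that every $(\bbl,\bbk)$-Luzin set is meager: since $\bbl\perp\bbk$ and both ideals have Borel bases, there is a Borel null set $N_0$ whose complement $M_0=\cx\setminus N_0$ is meager; then $L=(L\cap N_0)\cup(L\setminus N_0)$ writes $L$ as a union of a meager set (applying the Luzin clause to $N_0\in\bbl$) and a subset of $M_0\in\bbk$, so $L\in\bbk$. Hence ``$L$ is $(\bbl,\bbk)$-Luzin'' is equivalent to ``$L\notin\bbl$ and $L\in\bbk$'', and it suffices to show that $\bbp$ preserves each of these two statements.

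Preservation of meagerness is the routine half and uses nothing about $\sqsubseteq^{random}$. As $\bbk$ has a Borel base, fix in $V$ a Borel meager set $M\supseteq L$ and a cover $M\subseteq\bigcup_{n}F_n$ by closed nowhere dense $F_n$ coded in $V$. The statements ``$F_n$ has empty interior'' and ``$L\subseteq\bigcup_n F_n$'' are arithmetic in the Borel codes (and $L$ consists of ground-model reals), hence absolute, so in $V[G]$ the $F_n$ are still closed nowhere dense and still cover $L$. Thus $L\in\bbk$ in $V[G]$; in particular $B\cap L\subseteq L\in\bbk$ for every $B\in\bbl$, which is the second clause in the definition of a $(\bbl,\bbk)$-Luzin set.

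For preservation of non-nullity I would use the hypothesis in the form that a forcing preserving $\sqsubseteq^{random}$ preserves the outer measure of \emph{every} ground-model set of reals, i.e.\ $\mu^*_{V[G]}(X)=\mu^*_V(X)$ for $X\in V$; this is the standard content of $\sqsubseteq^{random}$-preservation (\cite{goldi}, \cite{judah}), of which Fact~\ref{outer} is the special case $X=2^\omega$. In the language of the relation: for $X\in V$ the assertion ``$X\notin\bbl$'' is equivalent to ``$(\forall f\in C^{random})(\exists x\in X)\ f\sqsubseteq x$'', because the sets $A_f$ are cofinal in $\bbl$, and this $\sqsubseteq^{random}$-unboundedness statement is exactly what preservation keeps true. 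Applying it with $X=L$ gives $\mu^*_{V[G]}(L)=\mu^*_V(L)>0$, so $L\notin\bbl$ in $V[G]$. Together with the previous paragraph, $L$ is a $(\bbl,\bbk)$-Luzin set in $V[G]$.

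The step I expect to be the real content — and the one a referee would scrutinize — is this last passage from ``$\bbp$ preserves $\sqsubseteq^{random}$'', whose stated consequence (Fact~\ref{outer}) is only $\mu^*(2^\omega\cap V)=1$, to preservation of positive outer measure for the arbitrary ground-model set $L$. To avoid quoting it, one would unwind the definition of $\sqsubseteq^{random}$-preservation from \cite{goldi}: given $p$ and a name $\dot f$ for an element of $C^{random}$ with $p\Vdash L\subseteq A_{\dot f}$, compare $\dot f$ with its interpretation over a suitable countable elementary submodel and use that $L\notin\bbl$ provides, for each member of $C^{random}\cap V$, an element of $L$ escaping it — routine but not purely formal. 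Everything else (the reduction to ``non-null and meager'' and the absoluteness of meagerness of Borel sets) is elementary.
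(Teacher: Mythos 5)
Your proposal is correct, but it is organized differently from the paper's proof, which does not pass through the equivalence ``$L$ is $(\bbl,\bbk)$-Luzin iff $L\notin\bbl$ and $L\in\bbk$''. Instead, the paper verifies the Luzin clause directly in $V[G]$: given a new null set $A$, it produces a ground-model null set $B$ with $A\cap 2^\omega\cap V\subseteq B$, so that $A\cap L\subseteq B\cap L$, a set which is meager already in $V$ and stays meager by absoluteness of Borel codes for $\bbk$. So the paper's key intermediate claim is a covering statement about traces of new null sets on the old reals, while yours is preservation of positive outer measure of the single ground-model set $L$; the two are close relatives (yours follows from the paper's covering claim applied to a null set containing $L$), and \emph{both} genuinely need more than the literal content of Fact~\ref{outer} --- knowing only $\mu^*(2^\omega\cap V)=1$ controls neither the trace of an arbitrary new null set on $V$ nor the outer measure of an arbitrary old set --- so both proofs must at some point unwind Goldstern's actual definition of $\sqsubseteq^{random}$-preservation. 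You flag this explicitly and sketch the right elementary-submodel/interpretation argument; the paper buries it in a terse ``without loss of generality'' step. What your decomposition buys is that the second Luzin clause becomes free once $L\in\bbk$ is preserved (your preliminary observation that orthogonality of $\bbl$ and $\bbk$ forces every $(\bbl,\bbk)$-Luzin set to be meager is correct and is the standard trick), and the use of the forcing hypothesis is isolated in one clean statement about $L$. What the paper's route buys is a covering lemma that would work verbatim for $(\bbl,\cj)$-Luzin sets for any $\cj$ with absolute Borel codes, without needing $\bbl\perp\cj$. Your meager-preservation half via absoluteness of ``closed nowhere dense'' is fine and matches the absoluteness appeal the paper also makes.
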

\begin{proof}
 Assume that $V\models L \mbox{ is } (\bbl,\bbk)\mbox{-Luzin set.}$ Let us work in $V^\bbp[G].$
 Take any null set $A\in\bbl.$ Then there is a null set $B$ in ground model such that $A\cap V\subseteq B.$
 
Indeed, let us assume that there is no such $B\in V.$ Then without loss of generality 
$(2^\omega\setminus A)\cap V\in\bbl.$ But $A\in\bbl$ then we have that 
$2^\omega\cap V\subset A\cup ((2^\omega\setminus A)\cap V)$ which is a null set. 
But by Fact \ref{outer} $\mu^*(2^\omega\cap V)=1.$ So we have a contradiction.

Then intersection $A\cap L\subseteq B\cap L\in\bbk$ is a meager set in ground model. 
Then by absolutnes of borel codes of meager sets the set $A\cap L$ is a meager set what finishes the proof.
\end{proof}

\begin{remark} In constructible universe $L$ let us consider the countable forcing iteration $P_{\omega_2}=((P_\alpha,Q_\alpha):\alpha<\omega_2)$ of the length $\omega_2$ as follows, for any $\alpha<\omega_2$
\begin{itemize}
 \item if $\alpha$ is even then $P_\alpha\force ''Q_\alpha \text{ is random forcing}''$,
 \item in odd case $P_\alpha\force ``Q_\alpha\text{ is Laver forcing}''$.
\end{itemize}
Previously we noticed that both random and Laver forcing, preserves $\sqsubseteq^{random}$ and then by 
 Theorem \ref{goldi_iteration}  $P_{\omega_2}$ preserves relation $\sqsubseteq^{random}.$ By Theorem \ref{random_luzin} 
the $(\bbl,\bbk)$-Luzin sets are preserved by our iteration $P_{\omega_2}$. 
Moreover, in generic extension we have $cov(\bbl)=\omega_2$ and $2^\omega=\omega_2$ (for details see \cite{goldi}). 

Asuume that in the ground model $A$ is $(\bbl,\bbk)$-Luzin set with outer measure equal to one. 
Then in generic extension it has outer measure one and $|A|=\omega_1.$ So, it  does not contain any Lebesgue positive 
Borel set. Thus $A$ is completely $\bbl$-nonmeasurable set.
\end{remark}

The analogous machinery can be used for ideal of meager sets $\bbk$. Let us recall  the necessary definitions (see \cite{goldi}). 

Let $C^{Cohen}$ be set of all functions from $\omega^{<\omega}$ into itself. 
Then $\sqsubseteq^{Cohen}=\bigcup_{n\in\omega} \sqsubseteq_n^{Cohen}$ and for any $n\in\omega$ let
$$
(\forall f\in C^{Cohen})(\forall g\in \omega^\omega)( f\sqsubseteq_n^{Cohen} g\text{ iff }
(\forall k<n)( g\upharpoonright k^{\frown} f(g\upharpoonright k)\subseteq g)).
$$

Then finally we have the following theorem:
\begin{theorem}\label{cohen_luzin}
 Assume that $\bbp$ is a forcing notion which preserves $\sqsubseteq^{Cohen}$. Then $\bbp$ preserves being 
 $(\bbk, \bbl)$-Luzin set.
\end{theorem}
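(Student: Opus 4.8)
The plan is to mimic the proof of Theorem~\ref{random_luzin}, interchanging the null ideal with the meager ideal and the relation $\sqsubseteq^{random}$ with $\sqsubseteq^{Cohen}$. So I would fix a $(\bbk,\bbl)$-Luzin set $L$ in $V$, take a $\bbp$-generic $G$, work inside $V^\bbp[G]$, and verify there the two defining clauses of a $(\bbk,\bbl)$-Luzin set: that $L\notin\bbk$, and that $A\cap L\in\bbl$ for every $A\in\bbk$.

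The one substantive ingredient I would isolate is the category counterpart of Fact~\ref{outer}: if $\bbp$ preserves $\sqsubseteq^{Cohen}$, then $\bbp$ forces $\cx^V$ to be non-meager, and, more to the point, every meager set $A$ in $V^\bbp[G]$ satisfies $A\cap\cx^V\subseteq M$ for some Borel meager set $M$ coded in $V$; equivalently, the sets of $\bbk$ that are Borel and coded in $V$ still form a base of $\bbk$ restricted to $\cx^V$. This is exactly the point where $\sqsubseteq^{Cohen}$-preservation is used, playing the role that preservation of outer measure -- which is what really stands behind Fact~\ref{outer} -- plays in Theorem~\ref{random_luzin}. In Goldstern's framework \cite{goldi} I would get it either straight from the definition of preservation applied to a name for an element of $C^{Cohen}$ coding a meager superset of $A$, or by the same ``without loss of generality'' reduction used in Theorem~\ref{random_luzin}: if no such $M\in V$ existed, one could arrange $(\cx\setminus A)\cap\cx^V\in\bbk$, whence $\cx^V\subseteq A\cup\bigl((\cx\setminus A)\cap\cx^V\bigr)$ would be meager, contradicting the non-meagerness of $\cx^V$. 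I expect this step to be the main obstacle; the rest is bookkeeping.

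Granting this ingredient, I would finish as follows. First, $L\notin\bbk$ in $V^\bbp[G]$: were $L$ meager there, then applying the ingredient to a Borel meager superset of $L$ would produce a Borel meager set coded in $V$ containing $L\cap\cx^V=L$, so $L\in\bbk$ already in $V$, contradicting that $L$ is $(\bbk,\bbl)$-Luzin in $V$ (equivalently: $\sqsubseteq^{Cohen}$-preservation keeps the non-meager set $L$ non-meager). Second, given $A\in\bbk$ in $V^\bbp[G]$, choose by the ingredient a Borel meager $M$ coded in $V$ with $A\cap\cx^V\subseteq M$; since $L\subseteq\cx^V$, we get $A\cap L\subseteq M\cap L$. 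In $V$ the set $M$ is meager and $L$ is $(\bbk,\bbl)$-Luzin, so $M\cap L\in\bbl$, hence $M\cap L\subseteq\#c$ for some $c\in\omega^\omega\cap V$ coding a null set. By absoluteness of Borel codes for null sets, $V^\bbp[G]\models\#c\in\bbl$, so $A\cap L\subseteq M\cap L\subseteq\#c\in\bbl$ in $V^\bbp[G]$. Hence $L$ is a $(\bbk,\bbl)$-Luzin set in $V^\bbp[G]$, as required.
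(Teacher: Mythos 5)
The paper gives no proof of this theorem at all --- it only remarks beforehand that ``the analogous machinery can be used'' for the meager ideal --- and your proposal is exactly the intended analogue of the proof of Theorem~\ref{random_luzin}: the same reduction to the key covering ingredient (every new meager set traced on $\cx^V$ is contained in a ground-model-coded meager Borel set, supplied by $\sqsubseteq^{Cohen}$-preservation) followed by the same absoluteness-of-Borel-codes bookkeeping. Your write-up is correct and in fact slightly more careful than the paper's own argument in the random case, since you also verify $L\notin\bbk$ in the extension and observe that the covering ingredient is best obtained directly from the definition of preservation rather than from the weaker non-meagerness statement analogous to Fact~\ref{outer}.
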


The another preservation theorem which is due to Shelah (see \cite{shelah} and also \cite{schliri}) is as follows
\begin{theorem}[Shelah]\label{shelah_iteration}
Let $\bbp_\lambda=((P_\alpha,\dot{Q}_\alpha):\;\alpha<\lambda)$ be any countable support iteration such that 
$(\forall \alpha<\gamma)$ $P_\alpha\force Q_\alpha \text{is proper } $ and
$$
P_\alpha\force Q_\alpha\force \text{every new open dense set contains old open dense set}
$$
then $\bbp_\lambda \force \text{ every new open dense contains old open dense set}$.
\end{theorem}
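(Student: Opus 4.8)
The plan is to prove the statement by induction on the length $\lambda$ of the iteration, following the pattern of Shelah's preservation theorems for countable support iterations of proper forcings; the full bookkeeping is carried out in \cite{shelah} (see also \cite{schliri}), and I indicate only the structure of the argument. Throughout, ``open dense set'' means an open dense subset of the fixed Polish space $\cx$; since $\cx$ has a countable base, such a set is coded by a real, so the assertion ``every new open dense set contains an old open dense set'' is a statement about the reals of the extension. I would first isolate the corresponding single-step property relative to a countable elementary submodel: say that a proper forcing $\bbq$ has the \emph{generic form} of the property if, whenever $N\prec H_\theta$ is countable with $\bbq\in N$, $\dot D\in N$ is a $\bbq$-name for an open dense set and $q$ is an $(N,\bbq)$-generic condition, then there is an open dense $D'\in V\cap N$ with $q\force D'\subseteq\dot D$. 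One checks that for a single proper forcing this generic form is equivalent to the bare property, and it is the generic form that is stable under the iteration steps.

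For the successor case $\lambda=\alpha+1$ I would factor $\bbp_{\alpha+1}\cong\bbp_\alpha\ast\dot Q_\alpha$. By the inductive hypothesis $\bbp_\alpha$ has the property, and by assumption $\bbp_\alpha\force$ ``$\dot Q_\alpha$ has the property''. A new open dense set $D$ of $V^{\bbp_{\alpha+1}}[G]$ therefore contains, over $V^{\bbp_\alpha}[G_\alpha]$, an open dense set coded in $V^{\bbp_\alpha}[G_\alpha]$, which in turn contains one coded in $V$; since containment of sets is transitive, the two-step case follows. For a limit $\lambda$ with $\mathrm{cf}(\lambda)>\omega$ I would invoke the standard fact that a countable support iteration of proper forcings adds no reals at stages of uncountable cofinality: a name for a real is captured by a countable $N\prec H_\theta$, whose intersection with $\lambda$ is a countable, hence bounded, subset of $\lambda$. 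Thus every code for an open dense set of $V^{\bbp_\lambda}[G]$ already lies in some $V^{\bbp_\alpha}[G_\alpha]$ with $\alpha<\lambda$, and the inductive hypothesis at stage $\alpha$ applies verbatim.

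The crux is the limit case $\mathrm{cf}(\lambda)=\omega$, where genuinely new reals appear at stage $\lambda$; here I would run a fusion argument. Fix an increasing sequence $\lambda_n\nearrow\lambda$ with $\lambda_0=0$, a condition $p$, a name $\dot D$ for an open dense set, and an enumeration $\langle U_n:n\in\omega\rangle$ of the basic open subsets of $\cx$. Inside a countable $N\prec H_\theta$ containing all this data I would build a fusion sequence $p=p_0\ge p_1\ge\cdots$ with $p_{n+1}\restriction\lambda_n=p_n\restriction\lambda_n$ and $p_{n+1}\restriction\lambda_{n+1}$ being $(N,\bbp_{\lambda_{n+1}})$-generic, together with basic open sets $V_n\subseteq U_n$ satisfying $V_n\in V\cap N$, arranged so that $p_{n+1}\force V_n\subseteq\dot D$. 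The set $V_n$ is extracted by applying the generic form of the property to the initial segment $\bbp_{\lambda_{n+1}}$, which has length $\lambda_{n+1}<\lambda$ and hence falls under the inductive hypothesis; genericity over $N$ forces the witness down into $V\cap N$. A lower bound $q$ of the $p_n$ then exists by the fusion lemma for proper countable support iterations, and I would set $D'=\bigcup_n V_n$. Since each $U_n$ contains the nonempty open set $V_n$, the set $D'$ is open dense and coded in $V$, and since $q\le p_{n+1}$ for every $n$ we obtain $q\force V_n\subseteq\dot D$ for all $n$, whence $q\force D'\subseteq\dot D$.

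The main obstacle is precisely this coordination at $\mathrm{cf}(\lambda)=\omega$: one is never allowed to apply the property to a forcing of the full length $\lambda$, since the tail beyond $\lambda_n$ again has order type $\lambda$, so the single old open dense set must be assembled one basic open set $V_n$ per stage, while threading a coherent sequence of generic conditions whose common lower bound still forces each $V_n\subseteq\dot D$. Guaranteeing that the $V_n$ are coded in $V$ rather than merely in the intermediate models $V^{\bbp_{\lambda_n}}[G_{\lambda_n}]$, and that the fusion limit exists and forces the union into $\dot D$, is exactly the content of the generic-form inductive hypothesis together with Shelah's fusion bookkeeping; I would refer to \cite{shelah} and \cite{schliri} for the verification of these points.
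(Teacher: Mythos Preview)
The paper does not supply its own proof of this theorem: it is stated as a quoted result of Shelah, with a bare citation to \cite{shelah} and \cite{schliri}, and no argument is given. Your sketch therefore goes well beyond what the paper does. The outline you give --- induction on $\lambda$, trivial reduction at successor and at limits of uncountable cofinality, and a fusion argument inside a countable elementary submodel at limits of countable cofinality, with the preservation property recast in a ``generic'' form relative to $N\prec H_\theta$ --- is indeed the shape of Shelah's proof as presented in the references you (and the paper) cite, so in that sense your approach coincides with the intended one. Since you too defer the bookkeeping of the $\mathrm{cf}(\lambda)=\omega$ case to \cite{shelah} and \cite{schliri}, your proposal is best read as an informed summary of the cited proof rather than an independent argument; that is entirely appropriate here, given that the paper itself treats the theorem as a black box.
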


We can easily derive
\begin{cor} Let $\bbp_\lambda=((P_\alpha,\dot{Q}_\alpha):\;\alpha<\lambda)$ be any countable support iteration such that 
$(\forall \alpha<\lambda)$ $P_\alpha\force Q_\alpha \text{is proper } $ and
$$
P_\alpha\force Q_\alpha\force \text{every new open dense set contains old open dense set}
$$ 
Then $\bbp_\lambda$ preserves being $(\bbk,\bbl)$-Luzin set.
\end{cor}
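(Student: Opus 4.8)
The plan is to reduce to Theorem~\ref{shelah_iteration} together with the preservation theorem of Section~3 — the one asserting that a forcing $(\bbp,\le)$ for which the Borel sets from $\ci$ coded in the ground model remain a base for $\ci$ in $V^\bbp[G]$, and for which Borel codes for $\ci$ and $\cj$ are absolute, preserves $(\ci,\cj)$-Luzin sets. Applying Theorem~\ref{shelah_iteration} to the iteration at hand (this is where the properness hypothesis is used), we get that $\bbp_\lambda$ forces ``every new open dense set contains an old open dense set''. So it suffices to show that any single forcing notion $\bbp$ with this property preserves being $(\bbk,\bbl)$-Luzin, and I will obtain this by verifying the two hypotheses of the Section~3 preservation theorem with $\ci=\bbk$ and $\cj=\bbl$.

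For the base hypothesis I would translate ``meager'' into ``open dense'' in the usual way. Working in $V^\bbp[G]$, take $M\in\bbk$ and fix closed nowhere dense sets $(F_n)_{n\in\omega}$ with $M\subseteq\bigcup_{n\in\omega}F_n$. Each $D_n=\cx\setminus F_n$ is open dense, hence by hypothesis contains an open dense $D_n'\subseteq D_n$ coded in $V$; then $F_n'=\cx\setminus D_n'$ is closed nowhere dense and coded in $V$, and $F_n\subseteq F_n'$, so $M\subseteq\bigcup_{n\in\omega}F_n'$, which is a ground-model Borel meager set. Thus the Borel meager sets coded in $V$ form a base for $\bbk$ in $V^\bbp[G]$. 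The absoluteness hypothesis is the standard fact already invoked in the proof of Theorem~\ref{random_luzin}: the predicates ``$\#x\in\bbk$'' and ``$\#x\in\bbl$'' are $\Sigma^1_2$ in the code $x$, hence absolute between $V$ and its forcing extensions by Shoenfield absoluteness.

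Having checked both hypotheses, the Section~3 preservation theorem yields that $\bbp$ preserves being $(\bbk,\bbl)$-Luzin; specialising to $\bbp=\bbp_\lambda$ completes the proof. The only place demanding genuine care is the base computation: one must make sure that the conclusion of Theorem~\ref{shelah_iteration} at the limit stage $\lambda$ really makes the ground-model meager sets cofinal among \emph{all} meager sets of $V^{\bbp_\lambda}[G]$ (not merely among their traces on $V^{\bbp_\lambda}$), which is exactly why one should argue via closed nowhere dense sets and their open dense complements rather than via traces. Everything else is a direct appeal to Theorem~\ref{shelah_iteration}, the Section~3 preservation theorem, and classical $\Sigma^1_2$-absoluteness.
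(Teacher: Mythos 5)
Your overall route --- Theorem~\ref{shelah_iteration} to push the ``every new open dense set contains an old open dense set'' property through the iteration, then the Section~3 base-preservation theorem with $\ci=\bbk$, $\cj=\bbl$, plus $\Sigma^1_2$-absoluteness of the codes --- is exactly the derivation the paper intends (the paper offers no written proof beyond ``we can easily derive''). But your verification of the base hypothesis has a genuine gap. From $M\subseteq\bigcup_{n\in\omega}F_n$ you choose, for each $n$ \emph{separately}, an old closed nowhere dense $F_n'\supseteq F_n$, and then assert that $\bigcup_{n\in\omega}F_n'$ is a ground-model Borel meager set. It need not be: each $F_n'$ is coded in $V$, but the sequence $(F_n')_{n\in\omega}$ is chosen in $V^{\bbp_\lambda}[G]$, and a proper iteration that adds reals also adds new $\omega$-sequences of ground-model codes. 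A Borel code for $\bigcup_n F_n'$ is assembled from the sequence of codes of the $F_n'$, so this union has not been shown to be coded in $V$, and hence the family of $V$-coded Borel meager sets has not been shown to be a base for $\bbk$ in the extension. This matters: to see that $L$ remains non-meager you want $L\subseteq B$ for a \emph{single} old meager Borel $B$, so that absoluteness of ``$\#b\in\bbk$'' contradicts $L\notin\bbk$ in $V$; knowing only $L\subseteq\bigcup_n F_n'$ with a possibly new enumeration yields no contradiction in $V$.

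The repair is standard but must be carried out: apply the hypothesis \emph{once} to a single dense open set that codes the entire sequence. For instance, working over $\omega^{<\omega}$, set $E=\{s\in\omega^{<\omega}:|s|\ge 1\ \land\ s\upharpoonright[1,|s|)\in D_{s(0)}\}$, where $D_n$ is the dense open set associated with $F_n$; $E$ is dense open, an old dense open $E'\subseteq E$ lies in $V$, and the sections $D_n'=\{t:\langle n\rangle^{\frown}t\in E'\}$ form a sequence \emph{in $V$} of dense open refinements, whose associated $F_\sigma$ meager set is then coded in $V$ and covers $M$. Two smaller remarks: the concern you do flag (traces on the old reals versus full meager sets) is actually not the danger point --- since $L\subseteq\cx^V$, the trace version would suffice for the ``$B\cap L\in\bbl$'' half, exactly as in the proof of Theorem~\ref{random_luzin} for measure --- whereas the uniformization of the countable sequence in $V$, which you do not address, is where the argument really needs work. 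With that step supplied, the rest of your reduction is correct.
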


\end{document}